\newtheorem{thm}{Theorem}
\newtheorem{prop}[thm]{Proposition}
\newtheorem{lem}[thm]{Lemma}
\newtheorem{cor}[thm]{Corollary}
\newtheorem{exam}[thm]{Example}
\theoremstyle{definition}
\newtheorem{defn}[thm]{Definition}
\newtheorem{notation}[thm]{Notation}
\theoremstyle{remark}
\newtheorem{rem}{Remark}
\def\C{\mathbb C}
\def\R{\mathbb R}
\def\D{\mathbb D}
\def\eps{\varepsilon}
\def\B{\mathbb{B}}
\def\Bn{\mathbb{B}_n}
\begin{document}
	
	\title[Examples of critically cyclic functions in the Dirichlet spaces of the ball]
	{Examples of critically cyclic functions in the Dirichlet spaces of the ball}
	
	\author{Pouriya Torkinejad Ziarati}
	
	\address{P.T.Ziarati\\
		Institut de Math\'ematiques de Toulouse; UMR5219 \\
		Universit\'e de Toulouse; CNRS \\
		UPS, F-31062 Toulouse Cedex 9, France} 
        \email{pouriya.torkinejad\_ziarati@math.univ-toulouse.fr}

	\begin{abstract}
              In this work, we construct examples of holomorphic functions in $D_2(\B_2)$, the Dirichlet space on $\B_2$, for which there exists a critical index $\alpha_c \in [\frac12,2]$ such that the function is cyclic in $D_\alpha(\B_2)$ if and only if $\alpha \leq \alpha_c$. To this end, we use the notion of \emph{interpolation sets} in smooth ball algebras, as studied by Bruna, Ortega, Chaumat, and Chollet.
        \end{abstract}

	
	\subjclass[2020]{32F45}
	
	\keywords{Dirichlet-type Spaces, Weighted Bergman Spaces, Cyclic Vectors, Shift Operator}
	
	\maketitle
\section{Introduction}
        
        Consider a Banach space of holomorphic functions $\mathcal{A}$ defined on a domain $\Omega$ in $\mathbb{C}^n$. A function $f \in \mathcal{A}$ is called \emph{cyclic} if the linear span of $f$ multiplied by all \emph{multipliers} of $\mathcal{A}$ is dense in $\mathcal{A}$. (Here, $m$ is a multiplier of $\mathcal{A}$ if $mf \in \mathcal{A}$ for all $f \in \mathcal{A}$.) Determining whether a function is cyclic is a space-dependent task. Roughly speaking, on a fixed domain, a more restrictive (larger) norm makes the conditions for cyclicity harder to satisfy. For instance, a function that is not cyclic in the Hardy space $H^2(\mathbb{D})$ is not cyclic in the Dirichlet space $D_1(\D)$ either.

        Another important factor in the study of cyclicity is the zero set of the function. Indeed, let $z_0 \in \overline{\Omega}$ be such that the evaluation functional
        \begin{equation*}
            \Lambda_{z_0} : \mathcal{A} \to \mathbb{C}, \quad f \mapsto f(z_0),
        \end{equation*}
        is bounded. Then any function admitting a zero at $z_0$ fails to be cyclic. In particular, this situation occurs when $z_0 \in \Omega$. Together with the fact that, in all the examples we consider,
functions bounded away from $0$ are easily seen to be cyclic,
this shifts the challenge to the study of boundary behavior of functions. 
        
        In Dirichlet-type spaces on the unit ball $D_\alpha(\Bn)$, which are the subject of this study, one may define for any $f \in D_\alpha(\Bn)$ a non-tangential boundary limit $f^*$ outside a set of $\alpha$-capacity zero \cite{ChalmoukisHartz2024} (precise
definitions of those concepts are given in Section 
\ref{Section2}). It turns out that, for $f$ to be cyclic, the zero set of $f^*$, $\mathcal{Z}(f^*)$, cannot have positive $\alpha$-capacity. A converse to this fact, however, is more subtle and has been the subject of many studies in one and several variables.
        
        In one variable, in 1984, Brown and Shields conjectured that an outer function $f \in D_1(\D)$ is cyclic if and only if $\mathcal{Z}(f^*)$ has zero logarithmic capacity \cite{BS84}. In 2009, El-Fallah, Kellay, and Ransford provided a partial answer for functions continuous up to the boundary with a specific type of zero set~\cite{ElFallah_BS_Conj}, and later extended this result to Dirichlet-type spaces by replacing logarithmic capacity with $\alpha$-capacity~\cite{ELFALLAH2010_Cantor}. Moreover, in harmonically weighted Dirichlet spaces, El-Fallah, Elmadani, and Labghail established a Brown--Shields type result under certain conditions on the weight measure~\cite{ELFALLAH_Dmu_BS}. In all of these results, the zero sets of $A^{\infty}(\D)$—the algebra of holomorphic functions that are $\mathcal{C}^{\infty}$-smooth up to the boundary—known as Beurling–Carleson sets, play an indispensable role. A modern treatment of these sets can be found in the work of Ivrii and Nicolau~\cite{IvriiNicolau_Beurling_Carleson_Set}. 
        
        In several variables, on the unit ball, the concepts of \emph{interpolation sets} and \emph{peak sets} in smooth ball algebras (which are more restrictive than zero sets) may provide a useful tool for studying cyclicity. In~~\cite{Pouriya_SecondPaper} the author used a result of Chaumat and Chollet~\cite{ChaumatChollet1979ensembles} on the peak sets of $A^{\infty}(\Bn)$ to obtain sharp cyclicity conditions for a class of holomorphic functions defined on a neighborhood of the closed unit ball, whenever their zero set is a compact complex-tangential manifold (which, by~\cite{ChaumatChollet1979ensembles}, is a peak set of $A^{\infty}(\Bn)$), thereby providing further positive evidence for the conjecture posed in~\cite[p. 3908]{VavitsasNonCyc} about the 
 cyclicity of polynomials.

        Consider a closed set $E \subset \partial \Bn$ and $\alpha_c \in [0,n]$ such that $E$ has zero $\alpha$-capacity if and only if $\alpha \leq \alpha_c$. Then any function in $D_\alpha(\Bn)$ whose zero set contains $E$ is non-cyclic whenever $\alpha > \alpha_c$. We call a function $f \in D_n(\Bn)$ a \emph{critically cyclic function} for the set $E$ if it has $E$ as its zero set and is cyclic in $D_\alpha(\Bn)$ if and only if $\alpha \leq \alpha_c$.

        Here, we use a sufficiency result due to Bruna and Ortega~\cite{Bruna1986} on the interpolation sets of $A^{m,s}(\Bn)$ for $0 < m + s < \infty$ and one provided by Chaumat and Chollet \cite{Chaumat_Chollet_Hausdorff} in $A^\infty(\Bn)$ to provide examples of critically cyclic functions in Dirichlet-type spaces on the unit ball. When these sets are contained in a transversal curve on the boundary of the unit ball, they satisfy a property called the $\mathcal{K}$-property. In the one-dimensional case, they are also known as \emph{porous sets} and have been studied in~\cite{Thomas_Nicolau_Borichev_Porous}, \cite{Dyn79}.
        
        As the results in~\cite{Bruna1986} and \cite{Chaumat_Chollet_Hausdorff} suggest, we study separately three interesting cases for the boundary zero set, namely the \emph{transversal}, the \emph{complex tangential}, and a case that is neither transversal nor complex tangential but is still \emph{totally real}. Later in Section~{\ref{Section2}}, we will provide precise definitions of these cases.

        In our examples, all constructed in the unit ball of $\C^2$, one observes that the relationship between the Hausdorff dimension of the zero set and the critical index $\alpha_c$ depends on the geometric nature of the set.
        In the transversal case, the zero set has Hausdorff dimension $d$ and the corresponding critical index satisfies $\alpha_c = 2 - d$.
        In the complex tangential case, however, the critical index is given by $\alpha_c = 2 - \frac{d}{2}$.
        We also construct an example by taking a compact subset of a transversal curve, of Hausdorff dimension $d$, and extending it along a complex tangential direction.
        This produces a totally real set of dimension $1 + d$, for which $\alpha_c = \frac{3}{2} - d$.
        
        These apparently different behaviors can be unified by introducing the notion of \emph{Kor\'anyi pseudodistance}
$d_K(z,w)= |1 - \langle z, w \rangle|$
(see Definition \ref{Korany_distance} below)
and subsequently of
        \emph{Kor\'anyi Hausdorff dimension} (Definition~\ref{Korany_dimension}), in terms of which the critical index satisfies the unified relation
        \[
        \alpha_c = 2 - \dim_{K}(E).
        \]
        This is also in agreement with the existing results in the literature for polynomials
        \cite{Kosinski_Vavitsas2023, Vavitsas2, VavitsasNonCyc}
        and holomorphic functions on a neighborhood of the closed unit ball
        \cite{Pouriya_SecondPaper}.

        To state our first result, we 
need the function space $A^k(\Bn) = \mathcal{O}(\Bn) \cap \mathcal{C}^k (\overline{\B}_n)$. By $d_K(z,E)$ we mean the Korányi pseudodistance between $z \in \overline{\B}_n$ and the set $E \subset \partial \Bn$ (see Definition~\ref{Korany_distance}). 

        In the transversal case, the following theorem gives us the necessary tool to prove our first main result:

        \begin{prop}{\cite[Theorem~4.3]{Bruna1986}} \label{MainTool1}
        Let $E$ be a $\mathcal{K}$-set contained in a simple transverse curve $\Gamma$.
        Then there exists $f\in A^1(\Bn)$ such that
        \begin{equation} \label{peak function estimate}
            |f(z)| \asymp d_K(z, E), \qquad z \in \overline{\B}_n.
        \end{equation}
        Moreover, $f$ has this property that $f^k \in A^k(\Bn)$ for $k$ being a positive integer.
        \end{prop}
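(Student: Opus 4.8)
The plan is to reduce the statement to an essentially one-dimensional problem along $\Gamma$ and to build $f$ explicitly, imitating --- in higher complexity --- the classical one-variable construction of a peak function with prescribed modulus. I would first parametrize $\Gamma = \gamma([0,1])$ with $\gamma$ smooth and transverse (so that $\dot\gamma$ is nowhere in the complex tangent space of $\partial\Bn$), whence $E = \gamma(K)$ for a compact $K \subseteq [0,1]$ with $[0,1]\setminus K = \bigsqcup_j (a_j,b_j)$ and $\ell_j := b_j - a_j$. Transversality pins down the anisotropic Koranyi geometry along $\Gamma$: the subarc over $(a_j,b_j)$ has Koranyi diameter $\asymp \ell_j^{1/2}$; a Koranyi ball of radius $r$ centred on $\Gamma$ meets $\Gamma$ in a subarc of $\gamma$-length $\asymp r^2$; and for $z$ near $E$ one has $d_K(z,E) \asymp r$ precisely when $z$ sits at Koranyi scale $r$ from the nearest endpoint of a gap of size $\gtrsim r^2$. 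Under this dictionary the $\mathcal{K}$-property of $E$ becomes an effective summability/porosity condition on the complementary arcs $(a_j,b_j)$, and this is the hypothesis that will make the construction below converge with the required estimates.

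It is convenient to build $F := f^2$ rather than $f$ directly, because $d_K(z,E)^2 = \inf_{\zeta \in E}|1 - \langle z,\zeta\rangle|$ is a genuinely Lipschitz function on $\bar{\B}_n$, vanishing exactly on $E$ and assembled from the elementary holomorphic blocks $1 - \langle z,\zeta\rangle$. I would aim for a holomorphic $F$ on $\Bn$, zero-free there, with $|F(z)| \asymp d_K(z,E)^2$ on $\bar{\B}_n$; then $f := F^{1/2}$ (a holomorphic branch exists, $F$ having no zeros on the ball) satisfies $|f| \asymp d_K(\cdot,E)$, and $f^k = F^{k/2}$ for every positive integer $k$. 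To construct $F$ I would proceed in two steps: (i) take a Whitney decomposition of a neighbourhood of $\Gamma$ in $\partial\Bn$ relative to $E$ and glue, by a subordinate partition of unity, the local factors $1 - \langle z,\zeta_j\rangle$, where $\zeta_j$ is a boundary point in the Whitney piece $Q_j$; this yields a smooth model $F_0$ on $\bar{\B}_n$ with $F_0 \asymp d_K(\cdot,E)^2$, zero-free on $\Bn$, and with $\bar\partial F_0$ supported near $E$ and small there; (ii) correct $F_0$ to a holomorphic function by solving $\bar\partial u = \bar\partial\log F_0$ with $u$ smooth up to $\partial\Bn$ and $|u| \ll 1$ near $E$, and setting $F := F_0\, e^{-u}$. (In one complex variable step (ii) is unnecessary: $F$ is simply the outer function with $|F^*| = d_K(\cdot,E)^2$.) The $\mathcal{K}$-condition on the arcs is what makes the partition-of-unity sum converge with uniform $A^2$ bounds and what controls the $\bar\partial$-data.

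The size estimate $|f(z)| \asymp d_K(z,E)$ on $\bar{\B}_n$ then reduces, through $f = F^{1/2}$, to $|F| \asymp d_K(\cdot,E)^2$, which follows from the construction of $F_0$ together with $|e^{-u}| \asymp 1$ near $E$ (and trivially $|F| \asymp 1$ away from $E$); the delicate spot is the behaviour near accumulations of comparably sized gaps, which I would treat by a Whitney-chain argument. The hard part will be the regularity: $f \in A^1(\Bn)$ and $f^k = F^{k/2} \in A^k(\Bn)$ for every $k \ge 1$. The heuristic is clean: near $E$ one has the derivative scaling $|\nabla^m F| \asymp d_K(\cdot,E)^{\,2-2m}$ --- a single derivative of $F$ is bounded because $d_K(\cdot,E)^2$ is Lipschitz, while each further derivative costs a factor $d_K(\cdot,E)^{-2}$ --- so a Fa\`a di Bruno expansion of $\nabla^m(F^{k/2})$ is dominated by $|F|^{(k-2)/2}|\nabla^m F| \asymp d_K(\cdot,E)^{\,k-2m}$, which stays bounded for $2m \le k$; a matching estimate at the top, fractional order supplies the Hölder/Zygmund endpoint and places $F^{k/2}$ in $A^k$. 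Turning this heuristic into rigorous bounds --- uniform up to $\partial\Bn$ and, crucially, uniform across $E$ --- is where the work lies: one combines estimates for the $\bar\partial$-solution $u$ in the relevant anisotropic scale, with weights adapted to the Whitney scales $\ell_j^{1/2}$, with a summation over all scales of the contributions of the factors $1 - \langle z,\zeta_j\rangle$ and of the derivatives of the partition of unity; it is exactly in this summation that the $\mathcal{K}$-property of $E$ is fully consumed. I expect this derivative bookkeeping, rather than the reduction to $\Gamma$ or the size estimate, to be the technical heart of the proof.
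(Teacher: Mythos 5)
First, note that the paper does not prove Proposition~\ref{MainTool1}: it is cited from \cite[Theorem~4.3]{Bruna1986}, so I am comparing your sketch with the Bruna--Ortega proof and with the closely parallel Proposition~\ref{MainTool3} in this paper. Your Kor\'anyi geometry of a transversal curve is reversed, and since the whole construction is pegged to that dictionary this is fatal. With the paper's normalization $d_K(z,w)=|1-\langle z,w\rangle|$, a smooth \emph{transversal} curve satisfies $d_K(z,w)\asymp|z-w|$ for $z,w\in\Gamma$ (stated in the Remark of Section~\ref{Subsec2.1}, and used in the proof of Theorem~\ref{MainThm1} via $d_K(\gamma(s),\gamma(s'))\asymp|s-s'|$). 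So a gap $(a_j,b_j)$ of $\gamma$-length $\ell_j$ has Kor\'anyi diameter $\asymp\ell_j$, not $\ell_j^{1/2}$, and a Kor\'anyi ball of radius $r$ meets $\Gamma$ in an arc of $\gamma$-length $\asymp r$, not $r^2$; the quadratic scaling you describe belongs to \emph{complex tangential} curves, where $d_K(z,w)\asymp|z-w|^2$. Your line ``$d_K(z,E)^2=\inf_{\zeta\in E}|1-\langle z,\zeta\rangle|$'' is a related normalization slip: $d_K(z,E)$ itself equals that infimum. As a result the anisotropic weights ``adapted to the Whitney scales $\ell_j^{1/2}$'' that you would feed into the $\bar\partial$-estimates are off by exactly a factor of two in the exponent.

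The regularity heuristic is also wrong as stated. If $|F|\asymp d_K(\cdot,E)^2$ with $F$ assembled from holomorphic Lipschitz blocks, the correct scaling is $|\nabla^m F|\lesssim d_K(\cdot,E)^{2-m}$ --- each Euclidean derivative costs at most one power of $d_K^{-1}$ in the worst (transverse) direction --- not $d_K^{2-2m}$. Then a Fa\`a di Bruno term of $\nabla^m(F^{k/2})$ obeys $\lesssim|F|^{k/2-p}\prod_i|\nabla^{m_i}F|\lesssim d_K^{k-2p}\prod_i d_K^{2-m_i}=d_K^{\,k-m}$, bounded precisely for $m\le k$, which is the correct threshold. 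Your exponent $d_K^{\,k-2m}$ with the constraint $2m\le k$ fails already at the base case $k=m=1$, so your own heuristic does not even put $f=F^{1/2}$ in $A^1(\Bn)$. Finally, the method is genuinely different from the cited one: Bruna and Ortega, like Proposition~\ref{MainTool3} here in the parallel totally real case, do not smooth and then $\bar\partial$-correct. They build a holomorphic $\psi$ directly as a sum over scales of rational blocks indexed by the $2^{-k}$-covering numbers $N_k$ of $E$, use the $\mathcal K$-condition $\sum_{k\ge k_0}2^{-k}N_k\lesssim 2^{-k_0}$ to obtain $\Re\psi\asymp\log(1/d_K(\cdot,E))$ together with $|D^\beta\psi|\lesssim d_K(\cdot,E)^{-|\beta|}$, and set $f=e^{-\psi}$; the size and derivative estimates then come out simultaneously, with no $\bar\partial$-problem to solve. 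Your $\bar\partial$ route might conceivably be made to work, but as written it rests on the inverted geometry and an incorrect derivative calculus and would not close.
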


       We call the function $f$ in~\eqref{peak function estimate} an \emph{extremal function} for the set $E$. 
       
      Let $|A|$ be the one dimensional Lebesgue measure of a 
      measurable set $A$. Let $E\subset \Gamma$ a transverse curve, we define:
 \begin{equation} \label{eq:E_t Leb Measure}
                E_{t} = \{z \in \Gamma: \mathrm{dist}(z,E) \leq t\},
            \end{equation}
    where $\mathrm{dist}$ refers to the Euclidean distance.        
    
    Using the scale of spaces $D_\alpha$ (see
    Definition \ref{dalpha}), our first main result reads:   
        \begin{thm} \label{MainThm1}
Let $E$ be a $\mathcal{K}$-set contained in a simple transverse curve $\Gamma$, and $d \in (0,1)$ be the Hausdorff dimension of $E$. Then $\dim_{K}(E) = d$.

Assume that whenever $t \in \R^+$ is small enough we have $|E_t|  \asymp t^{1 - d}$, and set $\alpha_c = 2 - \dim_{K}(E)$.
Then if  $f$ is an extremal function for the set $E$,
$f$ is cyclic in $D_\alpha(\B_2)$ if and only if $\alpha \leq \alpha_c$.
        \end{thm}

        In the complex tangential case, the following, which is a corollary of Theorem~{II} in \cite{Bruna1986} will serve as the main tool to establish our second main result:

        \begin{prop} \label{MainTool2}
        Let $M$ be a compact $\mathcal{C}^\infty$-smooth complex tangential curve and $E \subseteq M $ be compact. Given $ 0 <\epsilon < \frac{1}{2}$, there exists $f \in \mathcal{O}(\B_2)$ such that $\mathcal{Z}(f) \cap \overline{\B}_2 = E$ and for all $ k$ being a positive integer, we have:
        \begin{equation} \label{eq: Maintool2 estimate1}
             |f(z)| \asymp d_K(z,E)^{1+\epsilon},
        \end{equation}
        and
        \begin{equation} \label{eq: Maintool2 estimate2}
            |R^{k} f(z)| \lesssim d_K(z,E)^{-k+1+\epsilon},
        \end{equation}
        \end{prop}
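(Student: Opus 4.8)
\medskip
\noindent\textbf{Proof plan.}
The idea is to extract from Theorem~II of \cite{Bruna1986} an \emph{extremal function} $g$ attached to $E$ — holomorphic on $\B_2$, continuous on $\bar{\B}_2$, vanishing on $\bar{\B}_2$ exactly along $E$, and two-sidedly comparable to a fixed power of the Kor\'anyi distance to $E$ — and then to pass to a suitable fractional power of $g$ in order to realize precisely the vanishing order $1+\epsilon$ demanded by \eqref{eq: Maintool2 estimate1}. Concretely: since $M$ is a compact $\mathcal{C}^\infty$-smooth complex tangential curve, Theorem~II applies to the compact set $E\subseteq M$ and produces a peak function $u$ for $E$ (so $u\equiv1$ on $E$ and $|u|<1$ on $\bar{\B}_2\setminus E$) in the relevant smooth ball algebra $A^{m,s}(\B_2)$; putting $g:=1-u$ one obtains $g\in\hol(\B_2)\cap\mathcal{C}(\bar{\B}_2)$ with $g(\bar{\B}_2)$ contained in the closed right half-plane, $\mathcal{Z}(g)\cap\bar{\B}_2=E$, $\operatorname{Re} g>0$ on $\bar{\B}_2\setminus E$, and
\[
|g(z)|\asymp d_K(z,E)^{\mu},\qquad |R^{i}g(z)|\lesssim d_K(z,E)^{\mu-i}\ \ (i\ge1),
\]
for all $z\in\bar{\B}_2$, where $\mu>0$ is the vanishing order furnished by the theorem. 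The two-sided bound on $|g|$ is the quantitative core — it says that, in the Kor\'anyi calibration (recall that along $M$ the Kor\'anyi distance is quadratic in the Euclidean parameter), $g$ is a genuine order-$\mu$ defining function for $E$ — while the derivative bounds record the accompanying anisotropic regularity. The complex tangential hypothesis is exactly what makes such a $g$ available for \emph{every} compact $E\subseteq M$, in contrast with Proposition~\ref{MainTool1}, where $E$ was required to be a $\mathcal{K}$-set.

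Next, set $\beta:=(1+\epsilon)/\mu$ and $h:=g^{\beta}$, the fractional power being taken through the principal branch of the logarithm — which is legitimate since $g$ maps $\B_2$ into the open right half-plane. Then $h\in\hol(\B_2)$, and since $|h|=|g|^{\beta}\to0$ as $z\to E$ while $h$ is continuous and zero-free on $\bar{\B}_2\setminus E$, the function $h$ extends continuously to $\bar{\B}_2$ with $\mathcal{Z}(h)\cap\bar{\B}_2=\mathcal{Z}(g)\cap\bar{\B}_2=E$. Estimate \eqref{eq: Maintool2 estimate1} is then immediate: $|h(z)|=|g(z)|^{\beta}\asymp d_K(z,E)^{\mu\beta}=d_K(z,E)^{1+\epsilon}$. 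For \eqref{eq: Maintool2 estimate2} one expands $R^{k}h$ by the Fa\`a di Bruno formula for $w\mapsto w^{\beta}$ composed with $g$: it is a finite linear combination of terms $g^{\beta-p}\prod_{j\ge1}(R^{j}g)^{m_j}$ with $m_j\ge0$, $p=\sum_j m_j\ge1$ and $\sum_j j\,m_j=k$, and by the estimates of the previous paragraph each such term obeys
\[
\Bigl|\,g^{\beta-p}\prod_{j\ge1}(R^{j}g)^{m_j}\,\Bigr|\ \lesssim\ d_K(z,E)^{\mu(\beta-p)}\prod_{j\ge1}d_K(z,E)^{(\mu-j)m_j}\ =\ d_K(z,E)^{\mu\beta-k}\ =\ d_K(z,E)^{-k+1+\epsilon},
\]
which is exactly \eqref{eq: Maintool2 estimate2}; away from $E$ both estimates hold trivially, since there $h$ is bounded, bounded below, and $d_K(\cdot,E)\asymp1$.

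The step carrying the real weight is the extraction of $g$ in the first paragraph. Theorem~II of \cite{Bruna1986} is stated as a peak/interpolation result in the finite-smoothness scale $A^{m,s}(\B_n)$, so distilling from it a single holomorphic function with the clean package above — above all the \emph{two-sided} Kor\'anyi comparison $|g|\asymp d_K(\cdot,E)^{\mu}$ together with derivative bounds of every order — requires a careful passage between the $A^{m,s}$ formulation and the Kor\'anyi-ball geometry of $\partial\B_2$, tracking the quadratic degeneracy of $d_K$ along the complex tangential curve; it is also here that the admissible value of $\mu$, hence the precise range $0<\epsilon<\tfrac12$, gets pinned down. Once that is in place, the passage to the fractional power and the chain-rule bookkeeping of the second paragraph are routine.
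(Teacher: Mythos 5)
There is a genuine gap, and it sits precisely where you flag ``the step carrying the real weight.'' Theorem~II of Bruna--Ortega is an \emph{interpolation} theorem in the $A^{m,s}$ scale: it tells you that a first-order non-isotropic jet satisfying certain Whitney-type compatibility conditions on $M$ extends to a function in $A^{m,s}(\B_2)$. It does not, by itself, hand you a peak function $u$ with $u\equiv1$ on $E$, $|u|<1$ off $E$, and a two-sided Kor\'anyi comparison $|1-u|\asymp d_K(\cdot,E)^{\mu}$. The paper's proof shows what is actually needed: one first builds, via an elementary one-variable lemma, a scalar weight $\psi\in\mathcal{C}^{2,2\epsilon}(M)$ with $\psi\asymp\mathrm{dist}(\cdot,E)^{2+2\epsilon}$ (the exponent $1+\epsilon$ is \emph{designed in} at this stage, using the quadratic relation $d_K\asymp|\cdot|^2$ along $M$ — there is no fixed vanishing order $\mu$ to be normalized away afterwards); one then packages $\psi$ and its derivatives into a first-order non-isotropic jet $F$ and applies Theorem~II to obtain $f\in A^{1,\epsilon}(\B_2)$. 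This yields only the \emph{upper} bound $|f|\lesssim d_K(\cdot,E)^{1+\epsilon}$ together with the weighted derivative bounds. The \emph{lower} bound — the part your plan takes for granted — requires a separate correction: replacing $f$ by $f+c\,h^{1+\epsilon}$, where $h$ is the peak function of Theorem~6.2 for the whole manifold $M$ (not for $E$), and then a careful argument-angle estimate to show the two summands cannot destructively interfere near $E$. In particular, the ``peak function for an arbitrary compact $E\subseteq M$'' you invoke is not something Bruna--Ortega furnish; their peak functions are attached to the full smooth manifold.

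Even granting the extraction, a second issue undermines the fractional-power step: you need $g$ to map $\B_2$ into the open right half-plane to take $g^{\beta}$ via the principal branch, and to apply Fa\`a di Bruno near the boundary without branch-cut trouble. The paper's final function $f_1=f+c\,h^{1+\epsilon}$ is \emph{not} shown to have positive real part; the argument estimate only gives $|\arg f_1|$ bounded away from $\pi$ (the proof controls $|\arg f|\le\pi/6$ near $M$ and $|\arg h^{1+\epsilon}|\le\tfrac{\pi}{2}(1+\epsilon)$, which combine to something strictly less than $\pi$ but not less than $\pi/2$). So your premise that the input to the fractional power lives in a half-plane is stronger than what the construction actually delivers. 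The cleaner route, and the one the paper takes, is to avoid fractional powers entirely by encoding the exponent $1+\epsilon$ directly into the interpolated jet — which also explains where the restriction $0<\epsilon<\tfrac12$ comes from: it is exactly the H\"older-regularity requirement $2\epsilon<1$ on $\psi$, not a normalization of some exogenous $\mu$.
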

        where $R f$ denotes the radial derivative of a 
        function $f$ (see Definition~{\ref{Def: Radial Derivative}}).
        
Similarly to the above situation, Let $E\subset M$ a complex tangetial curve, we define: 
\begin{equation}
      E_t = \{z \in M: \mathrm{dist}(z,E) \leq t\}.
\end{equation}

        \begin{thm} \label{MainThm2}
        Let $M$ be a compact $\mathcal{C}^\infty$-smooth complex tangential curve, $E \subseteq M $ be compact,
and $d \in (0,1)$ be the Hausdorff dimension of $E$. 
Then $\dim_{K}(E) = \frac{d}{2}$.

Assume further that $|E_t|\asymp t^{1 - d}$; set
$\alpha_c = 2 - \dim_{K}(E)$, and let $f$ be a function
chosen as in Proposition~{\ref{MainTool2}}.
Then $f$ is cyclic in $D_\alpha(\B_2)$ if and only if $\alpha \leq \alpha_c$.
        \end{thm}

        In the following, which is our third main result, we create examples of critically cyclic functions in Dirichlet-type spaces with a zero set which is not contained in a complex tangential manifold or a transversal curve but is still contained in a totally real manifold. 

    \begin{thm}\label{MainThm3}
        Let $M \subset \partial \B_2$ be a smooth totally real manifold of dimension $2$. 
        Then for any $\kappa \in \left[\tfrac12,\tfrac32\right]$ there exist a compact set $E \subseteq M$ with $\dim_K(E)=\kappa$ and critical index 
        \[
        \alpha_c = 2 - \dim_K(E),
        \]
        and a function $f \in D_2(\B_2)$ such that $\mathcal{Z}(f) \cap \overline{\B}_2 = E$, and $f$ is cyclic in $D_\alpha(\B_2)$ if and only if $\alpha \leq \alpha_c$.
    \end{thm}

The paper is organized as follows. In Section~\ref{Section2}, we provide the preliminary material necessary to follow the rest of the paper, Section~\ref{Section3} presents the proof of Theorem~\ref{MainThm1}, Section~\ref{Section4} is devoted to the proof of Theorem~\ref{MainThm2}, and Section~\ref{Section5} contains the proof of Theorem~\ref{MainThm3}. Finally, we conclude with examples and a discussion in Section~\ref{Section6}.

\section{Preliminary Tools} \label{Section2}
\subsection{Basics and Notations} \label{Subsec2.1}
\begin{notation}
    Let $\Omega \subseteq \mathbb{C}^n$ be a domain. We denote by $\mathcal{O}(\Omega)$ the set of all holomorphic functions \( f : \Omega \to \mathbb{C} \).
\end{notation}
\begin{notation}
    We write $A \lesssim B$ to indicate that there exists a constant $C > 0$ such that $A \leq C B$. If both $A \lesssim B$ and $B \lesssim A$ hold, we write $A \asymp B$. The dependence of the constant $C$ on parameters will be specified whenever necessary.
\end{notation}

\begin{defn}
    Let \( \alpha \in \mathbb{R} \). A function \( f \in \mathcal{O}(\mathbb{B}_n) \), with power series expansion
    \begin{equation*}
        f(z) = \sum_{|k| = 0}^{\infty} a_k z^k, \quad z \in \mathbb{B}_n,
    \end{equation*}
    is said to belong to the Dirichlet-type space \( D_\alpha(\mathbb{B}_n) \) if
    \begin{equation*}
        \|f\|_{D_\alpha(\mathbb{B}_n)}^2 = \|f\|_{\alpha}^2 := \sum_{|k| = 0}^{\infty} (n + |k|)^\alpha \cdot \frac{(n - 1)!}{k! \, (n - 1 + |k|)!} \, |a_k|^2 < \infty,
    \end{equation*}
    where $k$ is a multi-index, \( |k| = k_1 + \cdots + k_n \), \( k! = k_1! \cdots k_n! \), and  \( z^k = z_1^{k_1} \cdots z_n^{k_n} \).
\end{defn}

\begin{defn}
\label{dalpha}
    Let \( \alpha \in \mathbb{R} \). The \emph{multiplier space} \( \mathcal{M}(D_\alpha(\Bn)) \) is the set of all functions \( \varphi \in D_\alpha(\mathbb{B}_n) \) such that for every \( f \in D_\alpha(\mathbb{B}_n) \), the product \( \varphi f \in D_\alpha(\mathbb{B}_n) \).

    The space \( \mathcal{M}(D_\alpha(\Bn)) \) is a Banach algebra when equipped with the operator norm induced by multiplication. In particular, when \( \alpha = 0 \), so that \( D_0(\mathbb{B}_n) = H^2(\mathbb{B}_n) \), we have
    \[
        \mathcal{M}(D_0(\mathbb{B}_n)) = H^\infty(\mathbb{B}_n),
    \]
    the space of bounded holomorphic functions on \( \mathbb{B}_n \).
\end{defn}

\begin{defn}
    $A^{k,s}(\Bn) = \mathcal{O}(\Bn) \cap \mathcal{C}^{k,s}(\overline{\B}_n)$. Also, $A^{k}(\Bn) = A^{k,0}(\Bn)$. 
\end{defn}

\begin{defn} \label{Def: Radial Derivative}
    Given $f \in \mathcal{O}(\Bn)$, the \emph{radial derivative} of $f$, denoted by $Rf$, is defined as follows:
    \begin{equation*}
        R f (z) = \sum_{i = 1}^n z_i \frac{\partial f}{\partial z_i} (z).
    \end{equation*}
\end{defn}

The following Lemma is a direct consequence of the fact that $D_{\alpha}(\Bn) = A^2_{-(\alpha+1)}(\Bn)$ in the sense of the generalization given to the definition of Weighted Bergman Spaces in \cite{ZhuZhao}.
\begin{lem} \label{RLem}
    $f \in D_{\alpha}(\Bn)$ if and only if $R^k f \in D_{\alpha - 2k}(\Bn)$. Also whenever $\alpha<0$ we have
    \begin{equation} \label{EqvBergNorm}
        || f ||^2_\alpha \simeq \int_{\Bn} (1-|z|^2)^{-(\alpha+1)} |f(z)|^2 \, dv(z).
    \end{equation}
\end{lem}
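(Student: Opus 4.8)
The plan is to reduce both statements to a term-by-term comparison of Taylor-coefficient series, using that the monomials $z^j$ are mutually orthogonal in each space involved. Fix $f \in \mathcal{O}(\Bn)$ with $f(z) = \sum_j a_j z^j$, and abbreviate $c_j := \|z^j\|^2_{D_0(\Bn)} = \|z^j\|^2_{H^2(\Bn)} = \dfrac{(n-1)!\,j!}{(n-1+|j|)!}$, so that by the definition of the $D_\beta$-norm one has $\|g\|^2_{D_\beta} = \sum_j (n+|j|)^\beta\, c_j\, |b_j|^2$ for every real $\beta$ and every $g(z) = \sum_j b_j z^j$ in $\mathcal O(\Bn)$.

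For the equivalence $f \in D_\alpha(\Bn) \Longleftrightarrow R^k f \in D_{\alpha-2k}(\Bn)$: since $R z^j = |j|\, z^j$ we have $R^k f(z) = \sum_j |j|^k a_j z^j$, and hence
\begin{equation*}
    \|R^k f\|^2_{D_{\alpha-2k}} = \sum_j (n+|j|)^{\alpha-2k}\, |j|^{2k}\, c_j\, |a_j|^2 ,
\end{equation*}
which differs from $\|f\|^2_{D_\alpha}$ only in that the factor $(n+|j|)^\alpha$ is replaced by $(n+|j|)^{\alpha-2k}\,|j|^{2k}$. The quotient of these two factors equals $\bigl(|j|/(n+|j|)\bigr)^{2k}$, which for $|j|\ge 1$ lies in the fixed interval $[(n+1)^{-2k},\,1]$, while the single index $j=0$ contributes $0$ to the former sum and the finite number $n^\alpha|a_0|^2$ to the latter (recall $c_0 = 1$). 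Therefore $\|f\|^2_{D_\alpha} \asymp |a_0|^2 + \|R^k f\|^2_{D_{\alpha-2k}}$; since constants lie in every $D_\beta(\Bn)$, the two memberships are equivalent (one may equally iterate the case $k=1$).

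For the integral formula when $\alpha<0$, set $s := -(\alpha+1)$, so $s > -1$ and the weighted volume integral converges. Passing to polar coordinates on $\Bn$, using $\int_{\partial\Bn}|\zeta^j|^2\,d\sigma(\zeta) = c_j$ for normalized surface measure $\sigma$, together with the beta integral $2n\int_0^1 r^{2|j|+2n-1}(1-r^2)^s\,dr = n\,B(n+|j|,\,s+1)$, one computes
\begin{equation*}
    \int_{\Bn}|z^j|^2(1-|z|^2)^{s}\,dv(z) = \frac{n!\,\Gamma(s+1)\,j!}{\Gamma(n+|j|+s+1)} = \frac{n!\,\Gamma(-\alpha)\,j!}{\Gamma(n+|j|-\alpha)} .
\end{equation*}
Since the monomials are orthogonal in $L^2\bigl((1-|z|^2)^{s}\,dv\bigr)$ — the torus action $z\mapsto(e^{i\theta_1}z_1,\dots,e^{i\theta_n}z_n)$ annihilates all cross terms — the right-hand side of \eqref{EqvBergNorm} equals $\sum_j |a_j|^2\,\dfrac{n!\,\Gamma(-\alpha)\,j!}{\Gamma(n+|j|-\alpha)}$, whose $j$-th coefficient divided by the corresponding coefficient $(n+|j|)^\alpha c_j$ of $\|f\|^2_{D_\alpha}$ equals $n\,\Gamma(-\alpha)\,(n+|j|)^{-\alpha}\,\dfrac{\Gamma(n+|j|)}{\Gamma(n+|j|-\alpha)}$.

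The only delicate point — and the place where $\alpha<0$ is genuinely used — is that this ratio is bounded above and below by positive constants uniformly in $j$. By the asymptotic $\Gamma(x+a)/\Gamma(x+b) = x^{a-b}\bigl(1+o(1)\bigr)$ as $x\to\infty$ we get $\Gamma(n+|j|)/\Gamma(n+|j|-\alpha) \sim (n+|j|)^{\alpha}$, so the ratio tends to $n\,\Gamma(-\alpha) > 0$; being a strictly positive continuous function of $|j|\in\{0,1,2,\dots\}$ — every Gamma value occurring is positive because $-\alpha>0$ and $n-\alpha>0$ — with a positive limit, it lies between two positive constants. This proves \eqref{EqvBergNorm}. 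When $\alpha\ge 0$ one has $s\le -1$ and the integral diverges, which explains both the restriction to $\alpha<0$ and why the general identification $D_\alpha(\Bn) = A^2_{-(\alpha+1)}(\Bn)$ needs the derivative-adjusted weighted Bergman norms of \cite{ZhuZhao}, compatibly with the first part of the lemma.
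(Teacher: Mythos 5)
Your argument is correct, but it is a genuinely different route from what the paper does: the paper gives no computation at all and simply records the lemma as ``a direct consequence of $D_\alpha(\Bn)=A^2_{-(\alpha+1)}(\Bn)$'' with a citation to Zhao--Zhu, whereas you prove both assertions from scratch by a term-by-term comparison of Taylor coefficients, evaluating $\int_{\Bn}|z^j|^2(1-|z|^2)^{s}\,dv$ via polar coordinates and the Beta integral and then checking, using $\Gamma(x+a)/\Gamma(x+b)\sim x^{a-b}$, that the resulting coefficient ratio $n\,\Gamma(-\alpha)\,(n+|j|)^{-\alpha}\Gamma(n+|j|)/\Gamma(n+|j|-\alpha)$ is bounded above and below uniformly in $j$. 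The two approaches buy different things: the paper's citation is economical but opaque, while your computation is self-contained, isolates exactly why $\alpha<0$ is needed (so that $s=-(\alpha+1)>-1$ and the weighted integral converges), and makes the first part of the lemma immediate from $Rz^j=|j|z^j$. One small point worth flagging is that the normalizing constant you use, $c_j = (n-1)!\,j!/(n-1+|j|)!$, is the correct $\|z^j\|^2_{H^2(\Bn)}$ and is what makes $D_0=H^2$ as the paper asserts; the coefficient actually displayed in the paper's definition, $(n-1)!/\bigl(k!\,(n-1+|k|)!\bigr)$, has the factorial in the denominator and appears to be a typo, so you did well to use the standard formula — with the literal printed coefficient the Bergman-norm comparison would produce a factor $(j!)^2$ and fail.
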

\begin{cor} \label{MultiplierCor}
    Let \( \alpha \in \mathbb{R} \), and let \( k \in \mathbb{N} \) be such that \( \alpha - 2k < 0 \). If $f \in D_\alpha(\Bn)$ and \( R^k f \in H^\infty(\mathbb{B}_n) \), then \( f \in \mathcal{M}(D_\alpha(\Bn)) \).
\end{cor}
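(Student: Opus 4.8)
The plan is to reduce the multiplier claim to a boundedness estimate via Lemma~\ref{RLem}, exploiting that the radial derivative converts the Dirichlet-type index $\alpha$ into a negative index where the norm is the concrete weighted-Bergman integral in~\eqref{EqvBergNorm}. First I would fix $\varphi := f$ and take an arbitrary $g \in D_\alpha(\Bn)$; the goal is to show $\varphi g \in D_\alpha(\Bn)$ with norm controlled by $\|g\|_\alpha$. By Lemma~\ref{RLem}, this is equivalent to showing $R^k(\varphi g) \in D_{\alpha - 2k}(\Bn)$, and since $\alpha - 2k < 0$, the norm of $R^k(\varphi g)$ is comparable to $\int_{\Bn}(1-|z|^2)^{-(\alpha-2k+1)} |R^k(\varphi g)(z)|^2 \, dv(z)$.

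The key step is the Leibniz-type expansion of $R^k(\varphi g)$. Because $R$ is a first-order derivation, $R^k(\varphi g) = \sum_{j=0}^{k} \binom{k}{j} (R^j \varphi)(R^{k-j} g)$. I would estimate each term separately. For the term $j = k$, we get $(R^k \varphi) g$; since $R^k \varphi \in H^\infty(\Bn)$ by hypothesis, $|(R^k\varphi) g|^2 \lesssim |g|^2$ pointwise, and $\int (1-|z|^2)^{-(\alpha-2k+1)}|g|^2\,dv$ is (up to constants) $\|g\|_{D_{\alpha-2k}}^2 \lesssim \|g\|_{D_\alpha}^2$ — wait, this is the wrong direction, since $D_\alpha \subset D_{\alpha - 2k}$ gives $\|g\|_{D_{\alpha-2k}} \lesssim \|g\|_{D_\alpha}$, which is exactly what we want. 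For the remaining terms $0 \le j < k$, we have $R^{k-j} g$ with $k - j \ge 1$; by Lemma~\ref{RLem} applied to $g$, $R^{k-j} g \in D_{\alpha - 2(k-j)}(\Bn)$ and, since this index is also negative (it is $\le \alpha - 2 < \alpha - 2k + \text{something}$... more precisely $\alpha - 2(k-j) \le \alpha - 2 < 0$ when $j \le k-1$ provided $\alpha - 2k < 0$ — actually I should check: $\alpha - 2(k-j) < 0$ needs $\alpha < 2(k-j)$, which for $j = k-1$ reads $\alpha < 2$; this may fail, so I must be more careful and possibly choose $k$ large enough that $\alpha - 2 < 0$ as well, or handle the borderline indices using the series definition directly). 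The factor $R^j \varphi$ for $j < k$ is not a priori bounded, but $\varphi \in D_\alpha \subset A^2_{-(\alpha+1)}$ and one can interpolate: $|R^j \varphi(z)| \lesssim (1-|z|^2)^{-(j - k)}\|R^k\varphi\|_\infty$ is false in general, but a standard estimate gives $|R^j\varphi(z)| \lesssim \|\varphi\|_{D_\alpha}(1-|z|^2)^{-(j - ?)}$; I would instead argue that the operator $g \mapsto (R^j\varphi)(R^{k-j}g)$ is bounded from $D_\alpha$ to the relevant weighted space using $R^j \varphi \in H^\infty$ for $j < k$ as well, because $R^j\varphi$ with $j < k$ has $R^{k-j}$ of it equal to $R^k\varphi \in H^\infty$, and a function whose $(k-j)$-th radial derivative is bounded is itself bounded (radial derivatives of lower order of an $H^\infty$-controlled tail are bounded — this follows since $R^{-1}$, i.e. radial integration against a bounded function, produces a bounded function on $\Bn$). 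So in fact \emph{all} $R^j\varphi$, $0 \le j \le k$, lie in $H^\infty(\Bn)$, and each term $(R^j\varphi)(R^{k-j}g)$ is pointwise dominated by $|R^{k-j}g|$.

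Putting this together, $\int_{\Bn}(1-|z|^2)^{-(\alpha - 2k + 1)}|R^k(\varphi g)|^2\,dv \lesssim \sum_{j=0}^k \int_{\Bn}(1-|z|^2)^{-(\alpha-2k+1)}|R^{k-j}g|^2\,dv$, and each summand equals (up to constants, by~\eqref{EqvBergNorm} when the index is negative) $\|R^{k-j}g\|_{D_{\alpha-2k}}^2 \lesssim \|g\|_{D_{\alpha-2(k-j)}}^2 \cdot$(embedding)$\lesssim \|g\|_{D_\alpha}^2$ — the embedding $D_\alpha \hookrightarrow D_\beta$ for $\beta \le \alpha$ is immediate from the series definition. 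Hence $\|\varphi g\|_{D_\alpha} \lesssim \|g\|_{D_\alpha}$, so $\varphi = f \in \mathcal{M}(D_\alpha(\Bn))$. The main obstacle I anticipate is the rigorous justification that lower-order radial derivatives $R^j\varphi$ ($j < k$) are bounded given only $R^k\varphi \in H^\infty$ and $\varphi \in D_\alpha$; this requires a clean ``radial antiderivative preserves boundedness'' lemma, together with taking care of any borderline indices $\alpha - 2(k-j)$ that are nonnegative by reverting to the coefficient definition of the norm rather than the integral form~\eqref{EqvBergNorm}.
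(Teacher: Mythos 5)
The paper presents this corollary without a written proof, so there is no argument to compare against; your task was to supply one, and you have done so correctly. Your plan is the natural one: apply the Leibniz rule to $R^k(fg)$, show that \emph{every} $R^jf$ with $0\le j\le k$ lies in $H^\infty(\Bn)$, and then bound each term $(R^jf)(R^{k-j}g)$ in $D_{\alpha-2k}$ using the integral norm \eqref{EqvBergNorm} together with the chain $\|R^{k-j}g\|_{D_{\alpha-2k}}\asymp\|g\|_{D_{\alpha-2j}}\lesssim\|g\|_{D_\alpha}$ furnished by Lemma~\ref{RLem} and the coefficient-level embedding $D_\alpha\hookrightarrow D_{\beta}$ for $\beta\le\alpha$.

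The two points you flagged as needing care are in fact non-issues, and you identified the right resolutions. First, the ``radial antiderivative preserves boundedness'' lemma is a clean consequence of the one-variable Schwarz lemma applied to slices: if $Rh\in H^\infty(\Bn)$, then for each fixed $z$ the function $t\mapsto Rh(tz)$ is holomorphic on $\D$, bounded by $\|Rh\|_\infty$, and vanishes at $t=0$ (since $Rh(0)=0$ for any $h$), hence $|Rh(tz)|\le\|Rh\|_\infty\,|t|$; substituting into $h(z)=h(0)+\int_0^1 Rh(tz)\,t^{-1}\,dt$ gives $\|h\|_\infty\le|h(0)|+\|Rh\|_\infty$. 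Iterating downward from $j=k$ gives $R^jf\in H^\infty$ for all $j\le k$. Second, the worry about indices $\alpha-2(k-j)$ being nonnegative does not arise: the integral form \eqref{EqvBergNorm} is invoked only at the single negative index $\alpha-2k$, while the norm equivalences $\|R^{k-j}g\|_{D_{\alpha-2k}}\asymp\|g\|_{D_{\alpha-2j}}$ (up to constant-term contributions) and the embedding $\|g\|_{D_{\alpha-2j}}\lesssim\|g\|_{D_\alpha}$ are read off directly from the power-series definition of the $D_\beta$ norms and need no sign condition. With these two observations your argument closes cleanly and yields $\|fg\|_{D_\alpha}\lesssim\|g\|_{D_\alpha}$.
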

\begin{defn}
    A function $f \in D_\alpha(\Bn)$ is called a \emph{cyclic} function if:
    \begin{equation*}
        [f] := \overline{\operatorname{span} \{ p f : p \in \mathbb{C}[z_1,\ldots,z_n] \}} = D_\alpha(\mathbb{B}_n).
    \end{equation*}
\end{defn}

The next theorem provides a practical sufficient condition for the cyclicity of a function. For a detailed proof, one may consult \cite{Knese2019_Aniso_Bidisk}.

\begin{thm} \label{Radial_Dilation}
    Let $f \in D_{\alpha}(\Bn)$ and assume that $f$ does not vanish on $\Bn$. For $0 < r < 1$, define the radial dilation
    \begin{equation*}
        f_r (z) = f(rz).
    \end{equation*}
    If
    \begin{equation*}
        \sup_{0 < r < 1} \left\| \frac{f}{f_r} \right\|_{\alpha} < +\infty,
    \end{equation*}
    then $f$ is cyclic in $D_{\alpha}(\Bn)$.
\end{thm}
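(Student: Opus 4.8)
The plan is to prove the ostensibly stronger assertion that $f$ is shift-cyclic, i.e.\ that $1 \in [f]$. This suffices: every polynomial is a bounded multiplier of $D_\alpha(\Bn)$, so multiplication by a fixed polynomial is continuous on $D_\alpha(\Bn)$ and $[f]$ is invariant under it; hence $1 \in [f]$ forces $[f] \supseteq \mathbb{C}[z_1,\dots,z_n]$, and since the polynomials are dense in $D_\alpha(\Bn)$ we obtain $[f] = D_\alpha(\Bn)$. Because $\mathbb{C}[z_1,\dots,z_n] \subseteq \mathcal{M}(D_\alpha(\Bn))$, this in particular makes $f$ cyclic. So the whole matter reduces to showing $1 \in [f]$.

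The first step is to check that $f/f_r \in [f]$ for every $0 < r < 1$. Since $f$ has no zero in $\Bn$ and $f_r$ involves only the values of $f$ on the compact subset $r\,\bar{\B}_n$ of $\Bn$, the function $f_r$ is holomorphic and zero-free on the open set $(1/r)\Bn$, which contains $\bar{\B}_n$. Hence $1/f_r$ is holomorphic in a neighbourhood of $\bar{\B}_n$, so its Taylor series at the origin converges to it, together with all derivatives, uniformly on $\bar{\B}_n$. Let $s_N$ be the $N$-th Taylor partial sum of $1/f_r$. Cauchy estimates show that the coefficients of $1/f_r$ decay geometrically, while the monomial multipliers $z^k$ act on $D_\alpha(\Bn)$ with norm growing at most polynomially in $|k|$ (alternatively one invokes Corollary~\ref{MultiplierCor} together with a quantitative form of it furnished by the closed graph theorem); this yields $\|(1/f_r) - s_N\|_{\mathcal{M}(D_\alpha(\Bn))} \to 0$, and therefore $s_N f \to (1/f_r) f = f/f_r$ in $D_\alpha(\Bn)$. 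As each $s_N f$ lies in $\operatorname{span}\{pf : p \in \mathbb{C}[z_1,\dots,z_n]\}$, we conclude $f/f_r \in [f]$.

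Next I would extract a weak limit. The hypothesis $\sup_{0<r<1} \|f/f_r\|_\alpha < \infty$ means that $\{f/f_r\}_{0<r<1}$ is a bounded subset of the Hilbert space $D_\alpha(\Bn)$. Fix $r_j \uparrow 1$; after passing to a subsequence, $f/f_{r_j}$ converges weakly in $D_\alpha(\Bn)$ to some $h$. Since $D_\alpha(\Bn)$ is a reproducing kernel Hilbert space on the ball, evaluation at any $w \in \Bn$ is a bounded functional, so weak convergence gives $h(w) = \lim_j f(w)/f(r_j w)$; by continuity of $f$ and its zero-freeness on $\Bn$ we have $f(r_j w) \to f(w) \neq 0$, hence $h(w) = 1$ for all $w \in \Bn$, i.e.\ $h \equiv 1$. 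Being a norm-closed subspace of a Hilbert space, $[f]$ is weakly closed, and each $f/f_{r_j}$ lies in $[f]$ by the previous step, so $1 = h \in [f]$; by the reduction above, $f$ is cyclic.

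The one place that is not soft functional analysis is the first step: one must know that $1/f_r$ is not merely a multiplier of $D_\alpha(\Bn)$ but is approximable in the multiplier norm by its Taylor polynomials, which is precisely what puts the uniformly bounded family $\{f/f_r\}$ inside the polynomial cyclic subspace $[f]$ rather than merely inside $D_\alpha(\Bn)$. This uses the analytic structure of the ball --- polynomial convexity of $\bar{\B}_n$ and the control of monomial multiplier norms --- and it is the step I expect to demand the most care.
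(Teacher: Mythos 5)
Your proof is correct, and it is essentially the standard argument behind this criterion; the paper itself does not give a proof of Theorem~\ref{Radial_Dilation} but defers to \cite{Knese2019_Aniso_Bidisk}, and the route you take (approximate $1/f_r$ in multiplier norm by its Taylor polynomials using the holomorphy of $1/f_r$ on a neighbourhood of $\bar{\B}_n$ and the polynomial growth of the monomial multiplier norms, conclude $f/f_r\in[f]$, then extract a weak limit of the bounded family $\{f/f_r\}$ and identify it as $1$ via the reproducing kernel) is the same one used there and in its one-variable antecedents. One small remark: the parenthetical appeal to Corollary~\ref{MultiplierCor} plus the closed graph theorem is unnecessary and somewhat vague --- your first justification (geometric decay of the Taylor coefficients against the directly verifiable bound $\|M_{z^k}\|_{\mathcal M(D_\alpha)}\lesssim(1+|k|)^{\alpha_+/2}$, which follows from $\|M_{z^k}\|_{\mathcal M(H^2)}\le 1$ and the weight ratio) is the cleaner and complete argument, so I would drop the parenthetical.
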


\begin{defn} \label{Korany_distance}
The Korányi pseudodistance on $\mathbb{C}^n$ is
defined by
\[
    d_K(z, w) = |1 - \langle z, w \rangle|,
\]
where $\langle z, w \rangle = \sum_{i=1}^n z_i \bar{w}_i$ denotes the standard Hermitian product.
This would be a distance on the unit sphere $\partial\Bn$,
except it only satisfies a quasi triangle inequality 
(up to a multiplicative constant; $d_K^{1/2}$ is truly
a distance, and called Kor\'anyi distance by most authors).
We shall extend the notation to situations where one of the points lies outside $\partial\Bn$.
For $z \in \mathbb{C}^n$ and $r > 0$, we denote by $K(z, r)$ the \emph{Korányi ball}
centered at $z$ with radius $r$.
\end{defn}

\begin{defn} \label{Korany_dimension}
(See, e.g., \cite{Pansu2018}.)
For $s\ge 0$ and $\delta>0$, the $s$--dimensional Kor\'anyi Hausdorff pre--measure
of a set $E\subset \partial\mathbb B_n$ is defined by
\[
\mathcal H^{s}_{K,\delta}(E)
:=
\inf\left\{
\sum_{j} r_j^{\,s}
:
E\subset \bigcup_{j} K(z_j,r_j),
\ \ 0<r_j\le \delta
\right\},
\]
The $s$--dimensional Kor\'anyi Hausdorff measure of $E$ is then
\[
\mathcal H^{s}_{K}(E)
:=
\lim_{\delta\to 0}\mathcal H^{s}_{K,\delta}(E).
\]
The Kor\'anyi Hausdorff dimension of $E$ is defined by
\[
\dim_{K}(E)
:=
\inf\{s\ge 0:\ \mathcal H^{s}_{K}(E)=0\}
=
\sup\{s\ge 0:\ \mathcal H^{s}_{K}(E)=\infty\}.
\]
\end{defn}
Note that our definition of Kor\'anyi Hausdorff dimension yields
a value that is half of what can be found in other references. 
For instance, for us $\dim_{K}(\partial \mathbb B_n)=n$.

\begin{defn}
    For $\zeta \in \partial \Bn$, the complex tangent space to the sphere is
    defined by
    \begin{equation*}
        T_\zeta^{\mathbb C} \partial \Bn
        := \{ v \in \mathbb C^n :  \langle v , \zeta \rangle = 0 \}.
    \end{equation*}    
\end{defn}

\begin{defn}
Let $M \subset \mathbb \partial \Bn$ be a $C^k$ real submanifold.
We say that $M$ is \emph{complex tangential} if, for every $\zeta \in M$,
\begin{equation*}
    T_\zeta M \subset T_\zeta^{\mathbb C}\partial \Bn.
\end{equation*}
\end{defn}

\begin{defn}
    Let $M \subset \mathbb \partial \Bn$ be a $C^k$ real submanifold.
    We say that $M$ is \emph{totally real} if, for every $\zeta \in M$,
    \begin{equation*}
        T_\zeta M \cap i T_\zeta M = \{0\}.
    \end{equation*}
\end{defn}

\begin{defn}
Let $\Gamma \subset \partial \Bn$ be a $C^k$ curve.
We say that $\Gamma$ is \emph{transversal} if, for every $\zeta \in \Gamma$,
\begin{equation*}
    T_\zeta \partial \Bn
    = T_\zeta \Gamma \oplus T_\zeta^{\mathbb C} \partial \Bn.
\end{equation*}
\end{defn}

\begin{rem}
The following facts will be used repeatedly throughout the paper (see, for example, Section~II in \cite{Bruna1986}).

\begin{enumerate}
\item
If $M \subset \partial\Bn$ is a totally real submanifold, then
\begin{equation*}
    \dim_{\mathbb R} M \le n .
\end{equation*}

\item
If $M \subset \partial\Bn$ is a complex tangential submanifold, then
\begin{equation*}
    \dim_{\mathbb R} M \le n-1 .
\end{equation*}

\item
If $M \subset \partial\Bn$ is smooth and complex tangential and $z,w \in M$, then
\begin{equation*}
    d_K(z,w) \asymp |z-w|^2 ,
\end{equation*}
\item
If $\Gamma \subset \partial\Bn$ is a smooth compact transversal curve and $z,w \in \Gamma$,
then
\begin{equation*}
    d_K(z,w) \asymp |z-w| .
\end{equation*}
\end{enumerate}
\end{rem}

\begin{defn} {\cite{Bruna1986}}
A closed set $E \subset \partial\Bn$ is called a \emph{$\mathcal{K}$-set} if there exists a constant $c > 0$
such that, for every Korányi ball $K_r$ of radius $r$,
\begin{equation} \label{eq:K_set condition}
    \int_0^r N_\varepsilon(K_r \cap E)\, d\varepsilon \leq c r,
\end{equation}
where $N_\varepsilon(K_r \cap E)$ denotes the minimal number of Korányi balls
of radius $\varepsilon$ required to cover $K_r \cap E$.
\end{defn}

\begin{defn}
Let $E \subset \mathbb \partial \Bn$ be a Borel set and let $\mathcal P(E)$ denote the set of all
Borel probability measures supported on $E$.  
For $\alpha \in (0,n]$, define the positive kernel
\begin{equation*}
    \mathcal{K}_{\alpha,n}(t) =
    \begin{cases}
        t^{\,\alpha - n}, & 0 < \alpha < n,\\[4pt]
        \log \!\left(\dfrac{e}{t}\right), & \alpha = n.
    \end{cases}
\end{equation*}
For $\mu \in \mathcal P(E)$, the Riesz $\alpha$-energy of $\mu$ is
\begin{equation*}
    I_{\alpha,n}[\mu]
    =
    \iint_{\mathbb \partial \Bn \times \partial \Bn} \mathcal{K}_{\alpha,n}\!\left( d_K(\zeta,\eta) \right)
    \, d\mu(\zeta)\, d\mu(\eta).
\end{equation*}
The \emph{Riesz--$\alpha$ capacity} of $E$ is then defined by
\begin{equation*}    
    \operatorname{cap}_{\alpha,n}(E)
    =
    {\inf \{ I_{\alpha,n}[\mu] : \mu \in \mathcal P(E) \}^{-1}}.
\end{equation*}

The following is a useful tool to determine the non-cyclicity of a function

\begin{thm} {\cite[Theorem~4.4]{sola2014notedirichlettypespacescyclic}} \label{thm:cap_noncyclic}
    Suppose $f \in \mathcal{D}_{\alpha}(\Bn) \cap A(\Bn)$ satisfies $\operatorname{cap}_{\alpha,n}(Z(f) \cap \partial \Bn) > 0$. 
    Then $f$ is not cyclic in $\mathcal{D}_\alpha(\Bn)$.
\end{thm}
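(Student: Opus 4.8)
The plan rests on the principle that the critical index equals $\alpha_c=2-s$, where $s$ is the ``Kor\'anyi dimension'' of the boundary zero set (the exponent at which the Riesz $\alpha$‑energy changes character, $\alpha=2-s$ being the threshold), and on the anisotropy of $d_K$: a direction transverse to $T^{\mathbb C}\partial\B_2$ is seen by $d_K$ at its Euclidean size, while a complex tangential direction is seen squared. Thus I would build $E$ to be $d$‑dimensional in a transverse direction and $1$‑dimensional in a complex tangential one, giving $s=d+\tfrac12$ and $\alpha_c=\tfrac32-d$. Concretely: since $M$ is totally real of real dimension $2$ inside the $3$‑dimensional $\partial\B_2$, the line $L_p:=T_pM\cap T_p^{\mathbb C}\partial\B_2$ is one‑dimensional at every $p$, so $p\mapsto L_p$ is a smooth line field whose integral curves foliate $M$ by complex tangential arcs; fix a short smooth curve $\Gamma\subset M$ transverse to this foliation, which is then a transverse curve in $\partial\B_2$. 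On $\Gamma$ construct a homogeneous Cantor set $K$ of Hausdorff dimension $d$ whose $t$‑neighbourhood along $\Gamma$ has length $\asymp t^{\,1-d}(\log\tfrac1t)^{-\gamma}$ for a fixed $\gamma>1$ (a routine homogeneous Cantor construction, the logarithmic factor coming from a mild tuning of the successive ratios), and let $E$ be the union of the truncated leaves of the foliation through the points of $K$. Then $E\subset M$ is compact, totally real, with $\dim_HE=1+d$ (a logarithmic factor does not affect dimension), and in foliation coordinates $d_K(z,E)\asymp(1-|z|^2)+\rho(z)+\tau(z)^2$, where $\rho$ is the distance to $K$ along $\Gamma$ and $\tau$ the distance in the remaining (complex tangential) direction; hence, writing $\sigma$ for surface measure on $\partial\B_2$,
\begin{equation*}
    \sigma\bigl(\{\zeta\in\partial\B_2:\ d_K(\zeta,E)\le t\}\bigr)\ \asymp\ t^{\,\frac32-d}\bigl(\log\tfrac1t\bigr)^{-\gamma}\qquad(0<t\le t_0),
\end{equation*}
and $E$ is coverable by $\asymp t^{-(d+\frac12)}(\log\tfrac1t)^{-\gamma}$ Kor\'anyi balls of radius $t$.

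For the function, I would verify that $E$ --- a compact subset of a totally real manifold whose product structure makes the required covering/Hausdorff‑measure condition hold, all the more after the logarithmic thinning --- satisfies the hypotheses of the Chaumat--Chollet peak--interpolation theorem for $A^\infty(\B_2)$. This produces $f\in A^\infty(\B_2)$ with $\mathcal Z(f)\cap\bar{\B}_2=E$, with $f\ne0$ on $\B_2$, and with the sharp two‑sided estimate $|f(z)|\asymp d_K(z,E)$ on $\bar{\B}_2$. Since $f\in A^\infty(\B_2)$, every $R^kf$ is bounded, so the derivative bounds $|R^kf(z)|\lesssim1\lesssim d_K(z,E)^{1-k}$ come for free and $f\in D_\alpha(\B_2)$ for all $\alpha$.

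Non‑cyclicity for $\alpha>\alpha_c$ comes from capacity. Equip $E$ with its natural ``uniform'' probability measure $\mu$, for which $\mu(K(\zeta,t))\asymp t^{\,d+\frac12}(\log\tfrac1t)^{\gamma}$. A layer‑cake computation gives $\int_E d_K(\zeta,\eta)^{\alpha-2}\,d\mu(\eta)\asymp\int_0^1 t^{\,\alpha-\frac52+d}(\log\tfrac1t)^{\gamma}\,dt$, which is finite precisely when $\alpha>\tfrac32-d$. Hence $I_{\alpha,2}[\mu]<\infty$, so $\operatorname{cap}_{\alpha,2}(E)>0$ and, by Theorem~\ref{thm:cap_noncyclic}, $f$ is not cyclic in $D_\alpha(\B_2)$ for $\alpha_c<\alpha\le2$; the range $\alpha>2$ follows because cyclicity in $D_\beta$ is inherited by $D_\alpha$ whenever $\alpha\le\beta$. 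At the borderline $\alpha=\alpha_c$ the set $E$ has zero Kor\'anyi Hausdorff measure at the critical exponent $d+\tfrac12=2-\alpha_c$ (the covering sums are $\asymp(\log\tfrac1t)^{-\gamma}\to0$), which forces $\operatorname{cap}_{\alpha_c,2}(E)=0$; thus Theorem~\ref{thm:cap_noncyclic} does not obstruct cyclicity there, consistently with the next step.

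Cyclicity for $\alpha\le\alpha_c$ is via the radial–dilation criterion, Theorem~\ref{Radial_Dilation}, which is legitimate since $f\ne0$ on $\B_2$: I must bound $\|f/f_r\|_\alpha$ uniformly in $r$. For $\alpha\le0$ this is immediate from $|f/f_r|\lesssim1$ (using $\|g\|_{H^2}\le\|g\|_\infty$ when $\alpha=0$, and the weighted Bergman form of Lemma~\ref{RLem}, with weight exponent $>-1$, when $\alpha<0$). For $0<\alpha\le\alpha_c<2$, since $\|g\|_\alpha^2$ is comparable to $|g(0)|^2+\|Rg\|_{\alpha-2}^2$ by the definition of the $D_\alpha$‑norm and $D_{\alpha-2}$ carries a Bergman form (Lemma~\ref{RLem}, as $\alpha-2<0$), it suffices to control $\int_{\B_2}(1-|z|^2)^{1-\alpha}|R(f/f_r)(z)|^2\,dv(z)$. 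Writing $R(f/f_r)=\dfrac{Rf}{f_r}-\dfrac{f\,(Rf)_r}{f_r^{\,2}}$ and using $|f(rz)|\asymp(1-r)+d_K(z,E)$ together with the Step‑2 estimates, both terms are $\lesssim\bigl((1-r)+d_K(z,E)\bigr)^{-1}$; passing to polar coordinates $\delta=1-|z|^2$, $\omega=z/|z|$, using $d_K(z,E)\asymp\delta+d_K(\omega,E)$ and the neighbourhood estimate of Step~1 to carry out the $\omega$‑integral, and using that $a\mapsto a^{-(d+\frac12)}(\log\tfrac1a)^{-\gamma}$ is decreasing, the whole supremum over $r$ is bounded by
\begin{equation*}
    1+\int_0^{t_0}\delta^{\,\frac12-\alpha-d}\bigl(\log\tfrac1\delta\bigr)^{-\gamma}\,d\delta,
\end{equation*}
which converges for every $\alpha<\alpha_c$ (the exponent exceeds $-1$) and --- this is precisely why $\gamma>1$ was imposed --- also for $\alpha=\alpha_c$ (the exponent equals $-1$). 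Hence $\sup_{0<r<1}\|f/f_r\|_\alpha<\infty$ and $f$ is cyclic in $D_\alpha(\B_2)$ for all $\alpha\le\alpha_c$, which together with the previous step gives the equivalence. The step I expect to be the genuine obstacle is Step~2: confirming that this maximal‑dimensional totally real set $E$ really falls under the Chaumat--Chollet peak--interpolation theorem, and, above all, that one obtains the \emph{two‑sided} estimate $|f(z)|\asymp d_K(z,E)$ rather than a single inequality --- it is the lower bound, entering through $|f(rz)|\gtrsim(1-r)+d_K(z,E)$, that drives the dilation estimate --- together with the bookkeeping needed to make $\dim_HE=1+d$, the neighbourhood asymptotics with $\gamma>1$, and the Chaumat--Chollet hypotheses hold all at once.
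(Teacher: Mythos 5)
Your argument does not address the stated result. The theorem you were asked to prove is the abstract capacity--noncyclicity criterion: if $f\in\mathcal D_\alpha(\Bn)\cap A(\Bn)$ and $\operatorname{cap}_{\alpha,n}(Z(f)\cap\partial\Bn)>0$, then $f$ is not cyclic in $\mathcal D_\alpha(\Bn)$. This is a general potential-theoretic statement that applies to an arbitrary such $f$, with no particular geometry of the zero set imposed. What you have written instead is an outline of Theorem~\ref{MainThm3} --- the construction of a critically cyclic function whose boundary zero set is a totally real set obtained by thickening a Cantor set along a complex tangential foliation. Worse, your proposal explicitly \emph{invokes} the target statement as a black box (``by Theorem~\ref{thm:cap_noncyclic}, $f$ is not cyclic in $D_\alpha(\B_2)$'') in the non-cyclicity step, so even on its own terms it cannot serve as a proof of that statement; it would be circular.

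For context, the paper itself does not prove Theorem~\ref{thm:cap_noncyclic}; it is quoted from \cite{sola2014notedirichlettypespacescyclic}, so there is no ``paper's own proof'' to compare against. A proof of the quoted result goes along entirely different lines from what you wrote: one takes a probability measure $\mu$ on $Z(f)\cap\partial\Bn$ of finite $\alpha$-energy (available since the capacity is positive), uses the finite-energy condition to show that the Cauchy-type transform of $\mu$ defines a nonzero bounded linear functional on $\mathcal D_\alpha(\Bn)$, and observes that this functional annihilates $pf$ for every polynomial $p$ because $f$ vanishes on $\operatorname{supp}\mu$ and $f\in A(\Bn)$ permits passage to boundary values. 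Hence $[f]$ lies in a proper closed subspace and $f$ is not cyclic. None of the Chaumat--Chollet/Bruna--Ortega machinery, Cantor set thickening, or radial dilation estimates in your proposal are relevant to this statement.
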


\end{defn}

\section{Proof of Theorem~{\ref{MainThm1}}} \label{Section3}

Before proving Theorem~\ref{MainThm1}, we first introduce the following lemma that will serve as a technical tool in its proof.

\begin{lem} {\cite[Exercise~{9.4.1}]{ElFallah2014_book}} \label{lem:book_exercise}
Let $E \subset \mathbb{T}$ be a closed set of measure zero.  
Let $\phi : (0, \pi] \to \mathbb{R}^+$ be a positive, decreasing, and differentiable function.  
Then
\begin{equation*}
    \int_{\mathbb{T}} \phi\big(\mathrm{dist}(\zeta, E)\big)\, |d\zeta|
    = \int_0^{\pi} |\phi'(t)|\, |E_t|\, dt + 2\pi \phi(\pi),
\end{equation*}
where $E_t = \{\zeta \in \mathbb{T} : \mathrm{dist}(\zeta, E) \leq t\}$.
\end{lem}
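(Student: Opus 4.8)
The plan is to reduce the identity to the fundamental theorem of calculus combined with an interchange of integrals (Tonelli's theorem). The point is that $\phi$ can be written, pointwise, as a superposition of indicator functions of the sets $E_t$, and integrating that representation over $\mathbb{T}$ immediately produces the stated formula.

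Concretely, I would first note that, since $E$ is closed and nonempty with $|E|=0$, for every $\zeta\in\mathbb{T}\setminus E$ (which is $\mathbb{T}$ up to a null set) we have $0<\mathrm{dist}(\zeta,E)\le\pi$, so the fundamental theorem of calculus applied to the differentiable function $\phi$ on $[\mathrm{dist}(\zeta,E),\pi]$ gives
\begin{equation*}
    \phi\big(\mathrm{dist}(\zeta,E)\big)=\phi(\pi)-\int_{\mathrm{dist}(\zeta,E)}^{\pi}\phi'(t)\,dt=\phi(\pi)+\int_{\mathrm{dist}(\zeta,E)}^{\pi}|\phi'(t)|\,dt,
\end{equation*}
where the last equality uses that $\phi$ is decreasing, hence $|\phi'|=-\phi'$. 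The key elementary observation is that $\mathrm{dist}(\zeta,E)\le t$ if and only if $\zeta\in E_t$, so the last integral equals $\int_0^{\pi}|\phi'(t)|\,\mathbf{1}_{E_t}(\zeta)\,dt$.

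It then remains to integrate this identity over $\mathbb{T}$. The map $\zeta\mapsto\mathrm{dist}(\zeta,E)$ is $1$-Lipschitz, hence continuous, so $\zeta\mapsto\phi(\mathrm{dist}(\zeta,E))$ is measurable on $\mathbb{T}\setminus E$ (and $E$ is null), and the integrand $(t,\zeta)\mapsto|\phi'(t)|\mathbf{1}_{E_t}(\zeta)$ is nonnegative and jointly measurable, so Tonelli's theorem permits swapping the order of integration:
\begin{equation*}
    \int_{\mathbb{T}}\phi\big(\mathrm{dist}(\zeta,E)\big)\,|d\zeta|=2\pi\phi(\pi)+\int_0^{\pi}|\phi'(t)|\Big(\int_{\mathbb{T}}\mathbf{1}_{E_t}(\zeta)\,|d\zeta|\Big)\,dt=2\pi\phi(\pi)+\int_0^{\pi}|\phi'(t)|\,|E_t|\,dt.
\end{equation*}

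I do not expect a genuine obstacle here; the only point deserving a word of care is the bookkeeping near $E$. Since $E$ is closed, $\bigcap_{t>0}E_t=E$, so $|E_t|\to 0$ as $t\to 0^+$ (consistent with $|E|=0$), which confirms there is no contribution from the endpoint $t=0$; and the possible blow-up of $\phi$ as $t\to 0^+$ is harmless because it affects the integrand only on the null set $E$. The identity is understood to hold in $[0,\infty]$, so no integrability hypothesis beyond what is stated is needed.
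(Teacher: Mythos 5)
The paper offers no proof of this lemma; it is cited verbatim as Exercise~9.4.1 of the El-Fallah--Kellay--Mashreghi--Ransford book, so there is no ``paper proof'' to compare against. Your layer-cake/Tonelli argument is correct and is almost certainly the intended solution: writing $\phi(\mathrm{dist}(\zeta,E)) = \phi(\pi) + \int_0^\pi |\phi'(t)|\mathbf{1}_{E_t}(\zeta)\,dt$ for $\zeta\notin E$ and integrating over $\mathbb{T}$ gives the identity immediately. The one place that deserves a slightly more careful word than ``the fundamental theorem of calculus'' is the Newton--Leibniz step on $[\mathrm{dist}(\zeta,E),\pi]$: the hypothesis is only that $\phi$ is differentiable, not $C^1$ or a priori absolutely continuous. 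That step is still valid here, but for the right reason: since $\phi$ is decreasing, for any $a>0$ one has $\int_a^\pi |\phi'|\le \phi(a)-\phi(\pi)<\infty$ (Fatou applied to difference quotients), so $\phi'\in L^1([a,\pi])$, and a function differentiable at every point of a compact interval whose derivative is Lebesgue integrable is absolutely continuous there, whence $\phi(\pi)-\phi(a)=\int_a^\pi\phi'$. With that remark inserted, your proof is complete.
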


\begin{proof} [Proof of Theorem~\ref{MainThm1}]
    Let $\alpha_c = 2-d$. Using Lemma~\ref{RLem} and the fact that first-order derivatives of $f$ are bounded, one can easily verify that $f \in D_2(\B_2)$ and in fact, $f \in \mathcal{M}(D_2 (\B_2))$, we will prove that $g = f^2$ is cyclic in $D_{\alpha_c}(\B_2)$, and $f$ is non-cyclic if $\alpha > \alpha_c$.

    Note that cyclicity of $g$ implies cyclicity of $f$, since $f$ being a multiplier implies the following which is a direct consequence of the definition
    \begin{equation*}
        [g] = [f^2] \subseteq [f].
    \end{equation*}

    To prove the cyclicity part it is enough to show that
    \begin{equation} \label{radial_dilation_equation}
        \sup_{r \in (0,1)} \left\| \frac{g}{g_r} \right\|_{\alpha_c} < +\infty.
    \end{equation}
    In this way, we can apply Theorem~\ref{Radial_Dilation} to conclude that $g$ is cyclic.  
    To prove \eqref{radial_dilation_equation}, we use Lemma~\ref{RLem} to show that $\left\| R \frac{g}{g_r} \right\|_{\alpha_c - 2}$ is uniformly bounded.  
    To do so we write:
    \begin{equation} \label{eq: Rg/g_r estimate1}
        \left| R \frac{g}{g_r} \right| = \left|  \frac{g_rRg - g Rg_r}{g^2_r} \right| = \left|  \frac{(g_r - g) Rg + g (Rg - Rg_r)}{g^2_r} \right|.
    \end{equation}
    Let $g = \sum_{k \geq 0} g'_k$ and $f = \sum_{k \geq 0} f'_k$ where $g'_k$'s and $f'_k$'s are $k$-homogeneous polynomials. After applying the triangle inequality to the right hand side of \eqref{eq: Rg/g_r estimate1} we may write
    \begin{equation} \label{eq: Rg/g_r estimate2}
    \begin{aligned}
        \left| R \frac{g}{g_r} \right| &\leq  \left|  \frac{2(f_r+f) f Rf \sum_k(1-r^k)  f'_k}{g^2_r} \right| + \left|  \frac{ g \sum_k(1-r^k) k g'_k}{g^2_r} \right| \\
        &\lesssim \left(\left(\left|\frac{f}{f_r}\right| + \left|\frac{g}{g_r}\right| \right) \left|  \frac{ \sum_k(1+r+\ldots+r^{k-1})  f'_k}{g_r} \right| + \left|\frac{g}{g_r}\right|  \left|  \frac{ \sum_k(1+r+\ldots+r^{k-1}) k g'_k}{g_r} \right| \right) (1-r) \\
        &\lesssim \left(\left(\left|\frac{f}{f_r}\right| + \left|\frac{g}{g_r}\right| \right) \left|  \frac{ \sum_k k f'_k}{g_r} \right| + \left|\frac{g}{g_r}\right|  \left|  \frac{ \sum_k k^2 g'_k}{g_r} \right| \right) (1-r) \\
        &\lesssim (\left(\left|\frac{f}{f_r}\right| + \left|\frac{f}{f_r}\right|^2 \right)      \left|\frac{(1-r)}{f_r^2} \right|.
    \end{aligned}
    \end{equation}
    The last inequality in \eqref{eq: Rg/g_r estimate2} comes from the fact that, $\sum_k k^2 g'_k = R^2g$ and $\sum_k k f'_k = Rf$ are bounded in $\overline{\B}_2$ as $f \in A^1(\B_2)$ and $g \in A^2(\B_2)$, and also replacing $g = f^2$ at the end. Now the claim is that $\frac{f}{f_r} $ has a uniform bound in $H^{\infty}(\B_2)$. Before proceeding to prove this claim note that this together with \eqref{eq: Rg/g_r estimate2} implies:
    \begin{equation} \label{eq:Rg/g_r estimate3}
        \left\| R \frac{g}{g_r} \right\|_{\alpha_c - 2}^2 \lesssim (1-r)^2 \int_{\B_2} \left| \frac{1}{f_r^4(z)}  \right| (1-|z|^2)^{1-\alpha_c}\, \mathrm{d}v(z).
    \end{equation}
    
    To prove the claim, we may write:
    \begin{equation} \label{eq:f/f_r L^inf estimate1}
        \left| \frac{f}{f_r} \right| \leq \left| \frac{f-f_r}{f_r} \right| + 1 \lesssim \left| \frac{\sum_{k} (1-r^k)f_k'}{f_r}\right| +1 \lesssim \left| \frac{1-r}{f_r} \right| +1.
    \end{equation}
    
    Lemma~{5.6} in \cite{Bruna1986} implies that, given $\zeta_0 \in E$ there exists an open neighborhood $U$ of $\zeta_0$ and a coordinate $Z(x_1,x_2,x_3,x_4)$ centered at $\zeta_0$ and valid in $U \cap \overline{\B}_2$, where $Z(x_1,0,0,0)$ is in the inward normal direction to $\partial \B_2$ when $x_1 > 0$, $x_4 \in \mathbb{T}$, $Z(0,x_2,x_3,x_4) \in \partial \Bn$ and $Z(0,0,0,x_4) \in \Gamma$ satisfying
    \begin{equation}
        d_K(Z,E) \asymp x_1 + x_2^2 + x_3^2 + \mathrm{dist}(Z(0,0,0,x_4),E).
    \end{equation}
    Then, for $r > \tfrac{1}{2}$, there exists $c > 0$ such that
    \begin{equation} \label{f/fr eq2}
        |f_r(Z)| \geq c \Big( (1-r) + x_1 + x_2^2 + x_3^2 + \mathrm{dist}(Z(0,0,0,x_4),E) \Big).
    \end{equation}
    Fixing the notation $t = \mathrm{dist(Z(0,0,0,x_4),E)}$, and noting that we can cover $E$ by finitely many such $U$'s settles the claim using \eqref{eq:f/f_r L^inf estimate1}. Moreover, this, together with \eqref{eq:Rg/g_r estimate3} allows us to write: 
    \begin{equation} \label{f/fr eq3}
        \left\| R \frac{g}{g_r} \right\|^2_{\alpha_c - 2}
        \lesssim (1 - r)^2 \int_{-1}^1 \int_{-1}^1 \int_{-1}^1 \int_0^1 
        \frac{x_1^{1 - \alpha_c}}{((1-r) + x_1 + x_2^2 + x_3^2 + t)^{4}} \, dx_1\, dx_2\, dx_3\, dx_4.
    \end{equation}

    Letting \( x_1 = x_1^d \), we may rewrite \eqref{f/fr eq3} as
    \begin{equation}\label{eq:main-estimate}
    \begin{aligned}
    \left\| R \frac{g}{g_r} \right\|_{\alpha_c}^2
    &\lesssim (1 - r)^2 \int_{-1}^1 \int_{-1}^1 \int_{-1}^1 \int_0^1  
    \frac{1}{((1 - r) + x_1^{1/d} + x_2^2 + x_3^2 + t)^{4}} \, dx_1\, dx_2\, dx_3\, dx_4 \\
    &\asymp (1 - r)^2 \int_{-1}^1 \int_{-1}^1 \int_{-1}^1 \int_0^1 
    \frac{1}{((1 - r)^d + x_1  + x_2^{2d} + x_3^{2d} + t^{ d})^{\frac{4}{d}}} \, dx_1\, dx_2\, dx_3\, dx_4 \\
    &\asymp (1 - r)^2 \int_{-1}^1 \int_{-1}^1 \int_{-1}^1
    \frac{1}{((1 - r)^d + x_2^{2 d} + x_3^{2d} + t^{ d})^{\frac{4}{d} - 1}} \, dx_2\, dx_3\, dx_4 \\
    &\asymp (1 - r)^2 \int_{-1}^1 \int_{-1}^1 \int_{-1}^1
    \frac{1}{((1 - r) + x_2^2 + x_3^2 + t)^{\alpha_c + 2}} \, dx_2\, dx_3\, dx_4 \\
    &\asymp (1 - r)^2 \int_{-1}^1
    \frac{1}{((1 - r) + t)^{\alpha_c + 1}} \, dx_4.
    \end{aligned}
    \end{equation}
Let $\gamma: \mathbb{T} \rightarrow \tilde{\Gamma}$, where $\tilde{\Gamma}$ is a closed continuation of $\Gamma$ and $\gamma$ is a parametrization of $\tilde{\Gamma}$. Now define $\tilde{E} = \gamma^{-1}(E)$, the right hand side in \eqref{eq:main-estimate}, can be estimated using Lemma~\ref{lem:book_exercise} and recalling \eqref{eq:E_t Leb Measure} as follows:
\begin{equation}
    \begin{aligned}
        & (1 - r)^2 \int_{-1}^1
    \frac{1}{((1 - r) + t)^{\alpha_c + 1}} \, dx_4 \\
    & \asymp (1 - r)^2 \int_0^{\pi} \frac{1}{((1 - r) + t)^{\alpha_c + 2}}\, |\tilde{E}_t|\, dt \\
    & \asymp (1 - r)^2 \int_0^{\pi} \frac{t^{1-d}}{((1 - r) + t)^{\alpha_c + 2}}\, \, dt \\
    &\asymp (1-r)^{2 - \alpha_c -d} = 1 < + \infty.
    \end{aligned}
\end{equation}
Which implies cyclicity of $f$ in $D_{\alpha_c}(\B_2)$ and consequently for any $D_{\alpha}(\B_2)$ with $\alpha \le \alpha_c$.

To prove non-cyclicity of $f$ in $D_{\alpha}(\B_2)$ whenever $\alpha > \alpha_c$ first note that for $\gamma(s),\gamma(s') \in \Gamma$ we have:
\begin{equation*}
    d_K(\gamma(s),\gamma(s')) \asymp |s - s'|.
\end{equation*}
This implies 
\begin{equation*}
    \dim_{K}(E) = d
\end{equation*}
and
\begin{equation*}
    \operatorname{cap}_{\alpha,2}(E) \asymp \operatorname{cap}_{\alpha-1,1}(\tilde{E}).
\end{equation*}
Now the following, which is a corollary of Theorem~1 in Chapter~IV of~\cite{carleson1967selected}, completes the proof (see also Theorem~4.3 in~\cite{ELFALLAH2010_Cantor}):
\begin{equation*}
    \operatorname{cap}_{\alpha-1,1}(\tilde{E}) > 0 
    \quad \Longleftrightarrow \quad
    \int_{0}^{\pi} \frac{dt}{t^{2 -\alpha} |E_{t}|} < \infty ,
\end{equation*}
where the right hand side condition holds exactly when $\alpha > \alpha_c = 2-d$.

\end{proof}

\section{Proof of Theorem~{\ref{MainThm2}}} \label{Section4}

In order to prove Proposition~{\ref{MainTool2}}, we will first introduce the following lemma
\begin{lem} \label{lem:for MainTool2}
    Let $E \subseteq [0,1]$ be compact; then given $\epsilon > 0$ there exists $f \in \mathcal{C}^{2,\epsilon}\left([0,1]\right)$ such that
    \begin{equation} \label{eq:f dist to E property}
        f(x) \asymp \mathrm{dist}(x,E)^{2+\epsilon}.
    \end{equation}
\end{lem}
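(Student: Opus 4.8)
The plan is to build $f$ explicitly from a sum associated to the complementary arcs of $E$ in $[0,1]$, mimicking the classical one-variable construction of smooth functions vanishing exactly on a prescribed closed set to a prescribed order. Write the open set $[0,1]\setminus E$ as a countable disjoint union of open intervals $I_j = (a_j,b_j)$ with lengths $\ell_j = b_j - a_j$. On each $I_j$ one wants a bump-type function $f_j$ that behaves like $\mathrm{dist}(x,E)^{2+\epsilon} \asymp \big(\min(x-a_j,\,b_j-x)\big)^{2+\epsilon}$ on the middle portion of $I_j$ and decays smoothly to zero (together with enough derivatives) at the endpoints $a_j,b_j$; a natural choice is $f_j(x) = c\,(x-a_j)^{2+\epsilon}(b_j-x)^{2+\epsilon}\ell_j^{-(2+\epsilon)}$, possibly multiplied by a fixed smooth cutoff, and then set $f = \sum_j f_j$ on the complement and $f \equiv 0$ on $E$. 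The exponent $2+\epsilon$ is chosen precisely so that $f \in \mathcal{C}^{2,\epsilon}$: two derivatives fall on the power, leaving a H\"older-$\epsilon$ remainder, which matches the target regularity exactly.

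The first step is to verify the two-sided estimate \eqref{eq:f dist to E property}. The upper bound $f(x) \lesssim \mathrm{dist}(x,E)^{2+\epsilon}$ is immediate because at most one term $f_j$ is nonzero at a given $x$ (the intervals are disjoint) and on $I_j$ one has $(x-a_j)(b_j-x)\ell_j^{-1} \asymp \min(x-a_j, b_j-x) \asymp \mathrm{dist}(x,E)$ up to an absolute constant, using $\min(x-a_j,b_j-x) \le \ell_j/2$. The lower bound follows the same way since $\mathrm{dist}(x,E) = \min(x-a_j,b_j-x)$ exactly for $x\in I_j$ when $a_j,b_j\in E$ (endpoints of complementary intervals always lie in $E$, except possibly the two endpoints $0,1$ of $[0,1]$, which are handled separately or by convention). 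Thus $f(x) \asymp \mathrm{dist}(x,E)^{2+\epsilon}$ with constants independent of $j$.

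The main obstacle is the regularity claim $f\in \mathcal{C}^{2,\epsilon}([0,1])$, and specifically the behavior of $f$ and its derivatives near points of $E$ — both isolated points of $\partial E$ and, more delicately, points of $E$ that are limits of infinitely many complementary intervals. Away from $E$, $f$ is locally a finite sum of smooth functions, so there is nothing to check. Near $\zeta\in E$, one estimates: if $\mathrm{dist}(x,E)=\rho$, then $x$ lies in some $I_j$ with $\ell_j$ possibly much larger or comparable to $\rho$, and one shows $|f(x)|\lesssim \rho^{2+\epsilon}$, $|f'(x)|\lesssim \rho^{1+\epsilon}$, $|f''(x)|\lesssim \rho^{\epsilon}$, using that differentiating $(x-a_j)^{2+\epsilon}(b_j-x)^{2+\epsilon}\ell_j^{-(2+\epsilon)}$ $k$ times near the $a_j$-end produces a factor $\lesssim (x-a_j)^{2+\epsilon-k}$ together with $\ell_j$-powers that are $\le 1$; one then checks $f',f''$ extend continuously by $0$ to $E$ and that $f''$ is H\"older-$\epsilon$ across the boundary, i.e. $|f''(x)-f''(y)|\lesssim |x-y|^\epsilon$ when one or both points lie in $E$, by splitting into the cases $|x-y|\gtrsim \mathrm{dist}(x,E)$ and $|x-y|\lesssim \mathrm{dist}(x,E)$ (the latter reducing to a single smooth $f_j$ with a derivative bound of the right order).

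An alternative, perhaps cleaner, route avoids writing $f_j$ by hand: invoke the Whitney extension theorem for the jet that is identically zero on $E$ together with the explicit requirement that the function be comparable to $\mathrm{dist}(\cdot,E)^{2+\epsilon}$ on the complement; concretely, take any standard smooth partition of unity subordinate to a Whitney decomposition of $[0,1]\setminus E$ into dyadic intervals $Q$ with $\mathrm{diam}(Q)\asymp \mathrm{dist}(Q,E)$, and on each $Q$ put $\mathrm{dist}(\cdot,E)^{2+\epsilon}$ times the cutoff. The Whitney-decomposition bookkeeping then gives the derivative bounds $|\partial^k f|\lesssim \mathrm{dist}(x,E)^{2+\epsilon-k}$ for $k\le 2$ automatically from the chain rule (each derivative of a cutoff adjusted to scale $\mathrm{dist}(x,E)$ costs $\mathrm{dist}(x,E)^{-1}$), and the $\mathcal{C}^{2,\epsilon}$ conclusion follows from the elementary fact that a function which is $\mathcal{C}^2$ off a closed set $E$, vanishes to second order at $E$, and satisfies $|\partial^2 f(x)|\lesssim \mathrm{dist}(x,E)^{\epsilon}$ is globally $\mathcal{C}^{2,\epsilon}$. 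I would present the hand-built $f_j$ version for concreteness, relegating the H\"older estimate for $f''$ near accumulation points of $\partial E$ to the one careful case analysis described above, which is the only genuinely non-routine point.
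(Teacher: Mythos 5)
Your construction coincides with the paper's: the paper sets $\phi_k(x) = |I_k|\,\phi\!\left(\frac{x-a_k}{|I_k|}\right)$ with $\phi(x)=x(1-x)$ on $[0,1]$ (and zero elsewhere), i.e.\ $\phi_k(x)=(x-a_k)(b_k-x)/|I_k|$, and takes $f=\sum_k \phi_k^{2+\epsilon}$, which is exactly your $\sum_j f_j$ up to the normalizing constant. The paper dismisses the regularity check as ``straightforward to verify''; your outline of the derivative bounds $|\partial^k f|\lesssim \mathrm{dist}(\cdot,E)^{2+\epsilon-k}$, the case split for the H\"older estimate on $f''$ across $E$, and your remark that the two extreme complementary intervals (when $0\notin E$ or $1\notin E$) need a separate fix supply exactly the details the paper omits.
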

\begin{proof}
    We define $\phi:\R \rightarrow \R$ as follows:
    \begin{equation*}
        \phi(x) =
    \begin{cases}
        x(1-x), & x\in [0,1],\\[4pt]
        0, & o.w.
    \end{cases}
    \end{equation*}
    Let $[0,1]\setminus E = \bigsqcup_{k \in \mathbb{N}} I_k$, where $I_k = (a_k,b_k)$ and, $|I_k| = b_k-a_k$, we may consider $\phi_k(x) = |I_k|\phi(\frac{x-a_k}{|I_k|})$. By definition we have $\mathrm{supp}\phi_k = \bar{I}_k$ and
    \begin{equation*}
        \phi_k(x) \asymp \operatorname{dist}(x,E), \qquad x \in I_k.
    \end{equation*}
    Now, it is straightforward to verify $f = \sum_k \phi_k^{2 + \epsilon} \in \mathcal{C}^{2,\epsilon}([0,1])$ and satisfies \eqref{eq:f dist to E property}. 
\end{proof}

\begin{proof}[Proof of Proposition~\ref{MainTool2}]
    Using Lemma~{\ref{lem:for MainTool2}} we may create $\psi \in \mathcal{C}^{2,2\epsilon}(M)$ such that
    \begin{equation*}
        \psi \asymp \mathrm{dist}^{2+2\epsilon}(.,E).
    \end{equation*}
    Given $\zeta_0 \in M$ let the coordinate map $Z(x_1,x_2,x_3,x_4)$ valid on a neighborhood $U$ of $\zeta_0$ be such that for all $\zeta \in U \cap M$
    \begin{enumerate} [(i)]
        \item $\left( \frac{\partial}{\partial x_3} (\zeta) , \frac{\partial}{\partial x_4} (\zeta) \right)$ span the complex tangent plane. 
        \item $\frac{\partial}{\partial x_4}(\zeta)$ is the tangent vector along $M$ at $\zeta$.
        \item $\frac{\partial}{\partial x_1}(\zeta)$ is the inward orthogonal vector to the tangent space at $\zeta$.
        \item $\frac{\partial}{\partial x_2}(\zeta)$ denotes the complex normal direction at $\zeta$.
    \end{enumerate}
    Now we may define a first order non-isotropic jet $F$ on $U \cap M$, i.e a function and its pre-determined derivatives with respect to $x_1$ and $x_2$ up to order $1$ and with respect to $x_3$ and $x_4$ up to order $2$. To this end for $\zeta \in U \cap M$ we write
    \begin{equation} \label{eq:Jet values}
    \begin{aligned}
    &F(\zeta) = \psi(\zeta), \\[4pt]
    &\dfrac{\partial F}{\partial x_i}(\zeta) = 0, \quad i=1,2, \\[4pt]
    &\dfrac{\partial F}{\partial x_4}(\zeta)
    = -\, i\, \dfrac{\partial F}{\partial x_3}(\zeta)
    = \psi'(\zeta), \\[4pt]
    &\dfrac{\partial^2 F}{\partial x_4^2}(\zeta)
    = - \dfrac{\partial^2 F}{\partial x_3^2}(\zeta)
    = -\, i\, \dfrac{\partial^2 F}{\partial x_3 \partial x_4}(\zeta)
    = \psi''(\zeta).
    \end{aligned}
    \end{equation}
    In fact, a similar reasoning to the discussion in the third paragraph of page~{568} in \cite{Bruna1986} allows us to extend the jet $F$ on $M$ in a well-defined fashion. Let $TF_\zeta$ be the Taylor polynomial generated using $F$ at $\zeta$, in such a case one may observe that
    \begin{equation*}
        |F_X(\zeta)-XTF_{\zeta}(z)| \lesssim d_K(z,\zeta)^{1 + \epsilon - w(X)},
    \end{equation*}
    where $w(X)$ refers to the weight associated to a composition of differential operators $X = X_1 \ldots X_k$, for example the radial derivative has weight $1$ and derivation along the complex tangential directions have weight $\frac{1}{2}$. Also, by $F_X$, we mean the element of $F$ which corresponds to the value of pre-determined $X$ derivative. For more discussion in this regard one may consult the introduction part in \cite{Bruna1986}.
    
    Theorem~{II} in \cite{Bruna1986} states that $M$ is an interpolation set for $A^{1,\epsilon}(\B_2)$. This guarantees that there exists $f \in A^{1,\epsilon}(\B_2)$ such that for all $\zeta \in M$, $f$ and its first order non-isotropic derivatives match with $F$. In particular, for $\zeta \in E$, the fact that $\psi(\zeta) = \psi'(\zeta) = \psi''(\zeta) = 0$ implies
    \begin{equation*} \label{eq: Maintool2 estimate_1}
        |f(z)| \lesssim d_K(z,E)^{1+\epsilon},
    \end{equation*}
    and,
    \begin{equation*} \label{eq: Maintool2 estimate_2}
        |X f(z)| \lesssim d_K(z,E)^{-w(X)+1+\epsilon},
    \end{equation*}
     Also, there exists $h \in A^{\infty}(\B_2)$ such that $|h(z)| \asymp d_K(z,M)$ and $\Re h >0$ on $\B_2$ (see Theorem~{6.2} in \cite{Bruna1986}). Now, perhaps by replacing $f$ with $c  h^{1+\epsilon} + f$ when $c>0$ is large enough, we may claim that
    \begin{equation*}
        |f(z)| \gtrsim d_K(z,E)^{1+\epsilon}.
    \end{equation*}
    To verify the claim, let $c_0>0$ be such that
    \[
        |f(z)| \le c_0 d_K (z,E)^{1+\eps} \mbox{ and } |Xf(z)| \le c_0 d_K (z,E)^{1+\eps-w(X)}.
    \]
    Now, choosing $h$ such that $h(z)\ge d_K(z,M)$,
    replace $f$ with $f_1(z):= f(z) + c_1 h(z)^{1+\eps}$, with $c_1 = 2 c_0 c_2^{1+\eps}$, $c_2>0$ to be chosen later. We claim that for $c_2$ large enough, there exists $c_3 >0$ such that $|f_1(z)| \ge c_3 d_K (z,E)^{1+\eps}$. 
    
    If we assume $d_K (z,E)\le c_2 d_K(z,M)$, the choice of $c_1$ ensures that $|f_1(z)|\ge \frac12 c_1 |h(z)|^{1+\eps} \ge c_0 d_K (z,E)^{1+\eps}$.

    Otherwise if $d_K (z,E)\ge c_2 d_K(z,M)$, choose a point $\pi(z)\in M$ such that $d_K(z,M)= d_K(z,\pi(z))$.  
    For $z\in \overline \B^2$ close enough to $M$, we can choose it so that for any $w\in [z;\pi(z)]$, $d_K(w,E)\le 2 d_K(z,E)$,
    so that 
    \[
        \left| f(z) - f(\pi(z)) \right| \le \left( \sup_{[z;\pi(z)]} |\nabla f | \right) d_K(z,\pi(z)) \le 2 c_0 d_K(z,E)^\eps d_K(z,M)
        \le 2c_0 c_2^{-1} d_K (z,E)^{1+\eps}.
    \]
    But 
    \[
        f(\pi(z)) = \psi(\pi(z))\ge c_\psi d_K (\pi(z),E)^{1+\eps} \ge 2^{-(1+\eps)} c_\psi d_K (z,E)^{1+\eps} \ge 2 | f(z) - f(\pi(z))| 
    \]
    for $c_2$ large enough, so
    that $| f(z)| \ge \frac12  c_\psi d_K (z,E)^{1+\eps} $ and $|\arg(f(z))| \le \frac\pi6$.

    On the other hand, since $\Re h(w)>0$ for $w\in \B^2$, we have $|\arg(h(z)^{1+\eps})|\le \frac\pi2 (1+\eps)$, so that 
    $|\arg(f(z))-\arg(h(z)^{1+\eps})|\le \frac\pi6 + \frac{3\pi}4$. We use the elementary fact that if two complex numbers 
    verify $| \arg \zeta - \arg \xi | \le \pi - \theta_0$, $\theta_0>0$, then $|\zeta + \xi | \ge (\sin \frac{\theta_0}2) (|\zeta| + |\xi |)$,
    and deduce $|f_1(z)| \ge (\sin \frac\pi{24}) | f(z)| \gtrsim d_K (z,E)^{1+\eps} $.  The claim is proved. 
\end{proof}

\begin{proof} [Proof of Theorem~\ref{MainThm2}]
    Let $f$ be as in Proposition~{\ref{MainTool2}}, using \eqref{eq: Maintool2 estimate1}, \eqref{eq: Maintool2 estimate2} and calculations along the lines of those which led to \eqref{eq:Rg/g_r estimate3} one may show that

    \begin{equation} \label{eq: Rg/g_r estimate2}
    \begin{aligned}
        \left| R \frac{f}{f_r} \right|(z) &\leq  \left|  \frac{(f-f_r) Rf}{f^2_r} \right|(z) + \left|  \frac{ f (Rf - Rf_r) }{f^2_r} \right|(z) \\
        &\lesssim \left|  \frac{(1-r) (Rf)^2}{f^2_r} \right|(z) + \left|  \frac{ (1-r) f R^2f }{f^2_r} \right|(z) \\
        &\lesssim \left|\frac{1-r}{f^2_r(z)}\right| d_K(z,E)^{2 \epsilon}    \\
        &\lesssim \frac{1-r}{d_K(z,E)^2},
    \end{aligned}
    \end{equation}
    and
    \begin{equation} \label{eq: shabih2}
        \left\| R \frac{f}{f_r} \right\|_{\alpha - 2}^2 \lesssim (1-r)^2 \int_{\B_2}  \frac{1}{d_K(rz,E)^{4}}   (1-|z|^2)^{1-\alpha}\, \mathrm{d}v(z).
    \end{equation}
    Also, as before, for $r > \frac{1}{2}$ and $\zeta_0 \in E$ there exists a neighborhood $U$ in $\C^2$ of $\zeta_0$ and a coordinate $Z(x_1,x_2,x_3,x_4)$ valid in $U$ such that for all $z \in \overline{\B}_2 \cap U$ we have
    \begin{equation*}
        d_K(rz,E) \gtrsim \left((1-r)x_1 +|x_2| + x_3^2 + \mathrm{dist}^2(Z(0,0,0,x_4),E)\right).
    \end{equation*}
    Letting $t = \mathrm{dist}(Z(0,0,0,x_4),E)$, $\alpha_c = 2 -\frac{d}{2}$ and noting that we may cover $E$ by finitely many such $U$'s allows us to write
    \begin{equation}
        \begin{aligned}
            \left\| R \frac{f}{f_r} \right\|^2_{\alpha_c - 2}
        &\lesssim (1 - r)^2 \int_{-1}^1 \int_{-1}^1 \int_{-1}^1 \int_0^1 
        \frac{x_1^{1 - \alpha_c}}{((1-r) + x_1 + |x_2| + x_3^2 + t^2)^{4}} \, dx_1\, dx_2\, dx_3\, dx_4 \\
        &\asymp (1 - r)^2 \int_{-1}^1 \int_{-1}^1 \int_{0}^1  
        \frac{1}{((1-r) + x_2 + x_3^2 + t^2)^{2 + \alpha_c}} \, dx_2\, dx_3\, dx_4 \\
        &\asymp (1 - r)^2 \int_{-1}^1 \int_{-1}^1  
        \frac{1}{((1-r) + x_3^2 + t^2)^{1 + \alpha_c}} \, dx_3\, dx_4 \\
        &\asymp (1 - r)^2 \int_{-1}^1  
        \frac{1}{((1-r) + t^2)^{\frac{1}{2} + \alpha_c}} \, dx_4 \\
        &\asymp (1 - r)^2 \int_{-1}^1  
        \frac{t^{2-d}}{((1-r) + t^2)^{\frac{5-d}{2}}} \, dt \asymp 1 < +\infty. \\
        \end{aligned}
    \end{equation}
    Now let $\alpha > \alpha_c$, and $\gamma : [0,1] \rightarrow M$ be a diffeomorphism for $s,s' \in [0,1]$ we have
    \begin{equation*}
        d_K(\gamma(s),\gamma(s')) \asymp |s-s'|^2,
    \end{equation*}
    which implies
    \begin{equation*}
       \dim_{K}(E) = \frac{d}{2}
    \end{equation*}
    and
    \begin{equation*}
            \operatorname{cap}_{\alpha,2}(E) \asymp \operatorname{cap}_{2\alpha-3,1}(\exp{\left(2\pi i \gamma^{-1}(E)\right)}).
    \end{equation*}
    Now similar to Theorem~{\ref{MainThm1}} we have
    \begin{equation*}
        \operatorname{cap}_{\alpha,2}({E}) > 0 
        \quad \Longleftrightarrow \quad
        \int_{0}^{1} \frac{dt}{t^{4 -2\alpha} |E_{t}|} < \infty .
    \end{equation*}
    so $\operatorname{cap}_{\alpha,2}(E)> 0$ and we are done.
\end{proof}

\section{Proof of Theorem~{\ref{MainThm3}}} \label{Section5}
    
    The following, which is a rather direct corollary of Proposition~{17} in \cite{Chaumat_Chollet_Hausdorff}, will allow us to prove our third main result. 

        \begin{prop} \label{MainTool3}
            Let $\gamma : (-1,1) \rightarrow \partial \B_2$ be a simple transversal curve. Let $I \subset (-1,1)$ be a closed interval such that there exists a coordinate mapping $Z : [0,1) \times (-1,1)^2 \times I \rightarrow \overline{\B}_2$ with the following properties:
            \begin{enumerate}
                \item $Z (0,0,0,s) = \gamma(s)$.
                \item $Z(0,x_2,x_3,s) \in \partial \B_2$.
                \item $\frac{\partial}{\partial x_1}$ corresponds to normal inward direction to $\B_2$. 
                \item $(\frac{\partial}{\partial x_2} (\zeta),\frac{\partial}{\partial x_3} (\zeta))$ span the complex tangential plane at $\zeta \in \gamma((-1,1))$.
            \end{enumerate}
            Let $\tilde{E} \subset \gamma(I)$ be a $\mathcal{K}$-set and define
            \begin{equation} \label{eq: mainthm3 set creation}
                E = \left\{\, Z(0,x_2,0,s) : x_2 \in [-\frac{1}{2},\frac{1}{2}],\ \gamma(s) \in \tilde{E} \,\right\}.
            \end{equation}
            Then there exists $\psi \in \mathcal{O}(\B_2)$ with the following properties:
            \begin{enumerate}
                \item $\Re \psi (z) \asymp \log(\dfrac{1}{d_K(z,E)}) + O(1)$.
                \item $\left| D^{\beta} \psi \right|(z) \lesssim \dfrac{1}{d_K(z,E)^{|\beta|}}$ where $D^{\beta}$ is a partial derivative with order $|\beta|$.
            \end{enumerate}
        \end{prop}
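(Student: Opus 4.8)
The plan is to obtain Proposition~\ref{MainTool3} from Proposition~17 of \cite{Chaumat_Chollet_Hausdorff} after reducing the statement to a metric one in a neighbourhood of $E$. Observe first that $\log(1/d_K(z,E))$ is bounded on $\{z\in\bar{\B}_2:\ d_K(z,E)\ge\delta_0\}$ for every fixed $\delta_0>0$, and that (1)--(2) are trivial there; hence it suffices to work in a small fixed neighbourhood of $E$ in $\bar{\B}_2$. After enlarging $I$ within $(-1,1)$ we may assume that $\tilde E$ lies in the interior of $\gamma(I)$, so that a sufficiently small such neighbourhood is covered by the chart $Z$ of the statement.

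The first step is to write down the Korányi distance to $E$ in the coordinates $Z$. Using the local Heisenberg-type model for $\bar{\B}_2$ attached to the transversal curve $\gamma$ --- in which, in these adapted coordinates, the inward normal variable $x_1$ and the arclength variable $s$ enter $d_K$ linearly while the two complex-tangential variables $x_2,x_3$ enter quadratically (this is essentially the content of Section~II and Lemma~5.6 of \cite{Bruna1986}, together with the fact that $d_K(\gamma(s),\gamma(s'))\asymp|s-s'|$ for a transversal curve) --- one gets, for a boundary point $Z(0,x_2',0,s')\in E$,
\[
  d_K\bigl(Z(x_1,x_2,x_3,s),Z(0,x_2',0,s')\bigr)\;\asymp\;x_1+(x_2-x_2')^2+x_3^2+|s-s'|,
\]
the remaining cross terms being of strictly lower order in a small enough neighbourhood. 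Minimising the right-hand side over $x_2'\in[-\tfrac12,\tfrac12]$ and over $s'$ with $\gamma(s')\in\tilde E$, and using that $\gamma$ is bi-Lipschitz, then yields
\[
  d_K(z,E)\;\asymp\;x_1+\mathrm{dist}(x_2,[-\tfrac12,\tfrac12])^2+x_3^2+\mathrm{dist}(\gamma(s),\tilde E)\;=:\;D(z),
\]
uniformly for $z=Z(x_1,x_2,x_3,s)$ in the chart near $E$.

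The second step is to invoke Proposition~17 of \cite{Chaumat_Chollet_Hausdorff}. By \eqref{eq: mainthm3 set creation}, $E$ is the thickening of the $\mathcal{K}$-set $\tilde E\subset\gamma(I)$ along the single complex-tangential direction $\partial/\partial x_2$ by the fixed segment $[-\tfrac12,\tfrac12]$; note that, because the complex-tangential thickening increases the relevant Korányi covering numbers, $E$ itself need not be a $\mathcal{K}$-set, so Proposition~\ref{MainTool1} does not apply --- but this is precisely the configuration handled by Proposition~17 of \cite{Chaumat_Chollet_Hausdorff}, whose standing hypothesis that the slice $\tilde E$ be a $\mathcal{K}$-set of the transversal curve is among our assumptions. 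That proposition then furnishes $\psi\in\mathcal{O}(\B_2)$ with $\Re\psi(z)\asymp\log(1/D(z))+O(1)$ and $|D^\beta\psi(z)|\lesssim D(z)^{-|\beta|}$ for every multi-index $\beta$, the latter isotropic bound being the coarsening --- valid since $D(z)\le1$ near $E$ --- of the natural non-isotropic estimates, weight $1$ for the radial derivative and weight $\tfrac12$ for complex-tangential ones. Substituting $D(z)\asymp d_K(z,E)$ from the first step gives (1) and (2), the region away from $E$ being trivial as already noted.

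I expect the main obstacle to be not the first step --- a localisation computation in the spirit of Lemma~5.6 of \cite{Bruna1986} --- but the precise matching of hypotheses: pinning down the condition under which Proposition~17 of \cite{Chaumat_Chollet_Hausdorff} yields a logarithmically singular holomorphic function, and verifying that the thickened set $E$ of \eqref{eq: mainthm3 set creation} satisfies it. Concretely, one must pass from the one-dimensional $\mathcal{K}$-property of the slice $\tilde E$ to the weaker covering/geometric condition --- which $E$ still meets even when it is no longer a $\mathcal{K}$-set --- governing the logarithmic construction for the two-dimensional totally real set $E$, while keeping track of the transversal-versus-complex-tangential weight bookkeeping behind the derivative bounds. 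The construction of $\psi$ itself, and the passage from local models to one globally holomorphic function on $\B_2$ with globally valid estimates, is internal to Proposition~17 and will be quoted.
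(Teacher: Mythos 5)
Your plan has a genuine gap: you treat Proposition~17 of \cite{Chaumat_Chollet_Hausdorff} as a black box that already supplies \emph{both} the logarithmic estimate on $\Re\psi$ and the isotropic derivative bound $|D^\beta\psi|\lesssim d_K(z,E)^{-|\beta|}$, and you explicitly defer the latter ("the passage\ldots is internal to Proposition~17 and will be quoted"). But that proposition only gives the paper the real-part estimate; the derivative bound (2) is new, and proving it is the substance of this proof. The paper therefore does not merely invoke Proposition~17 --- it re-runs the construction explicitly so as to have something to differentiate. Concretely: for each $s$ the slice $M_s=\{Z(0,x_2,0,s):x_2\in[-\tfrac12,\tfrac12]\}$ is a compact complex-tangential arc, and Lemma~15 of \cite{Chaumat_Chollet_Hausdorff} provides a defining function $H(s,z)$, holomorphic in $z$ and $C^\infty$ in $s$, with $|H(s,z)|\asymp d_K(z,M_s)$. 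One then chooses, for each dyadic scale $2^{-k}$, a covering of $\tilde E$ by $N_k$ arcs of that length with base points $\gamma(s_{jk})\in\tilde E$, and sets
\[
\psi(z)=\sum_{k\ge0}\sum_{j=1}^{N_k}\frac{2^{-k}}{2^{-k}+H(s_{jk},z)}.
\]
The derivative estimate is then obtained by splitting at the scale $k_0$ with $2^{-k_0}\asymp d_K(z,E)$: the tail $k\ge k_0$ is controlled by the $\mathcal K$-set inequality $\sum_{k\ge k_0}2^{-k}N_k\lesssim2^{-k_0}$, and the head $k<k_0$ by the separation $d_K(z,M_{s_{jk}})\gtrsim j2^{-k}+d_K(z,E)$. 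None of this machinery --- the fiberwise defining function $H$, the dyadic cover, the explicit series, or the $k_0$ splitting --- appears in your proposal, and without it the estimate (2) is unsupported.

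A secondary mismatch: your first step, expanding $d_K(z,E)$ in the chart $Z$ as $x_1+\mathrm{dist}(x_2,[-\tfrac12,\tfrac12])^2+x_3^2+\mathrm{dist}(\gamma(s),\tilde E)$, is not used in the proof of Proposition~\ref{MainTool3}; that kind of coordinate expansion is what the paper uses later, in the proof of Theorem~\ref{MainThm3}, to estimate the integral $\int(1-|z|^2)^{1-\alpha_c}d_K(rz,E)^{-4}\,dv$. Moreover your claim that the cross terms in the Korányi expansion are of "strictly lower order" is delicate when $|x_2-x_2'|\asymp1$ is allowed (as it is on $[-\tfrac12,\tfrac12]$), so even if it were needed here it would require more care. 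The correct structure is: explicit construction of $\psi$, borrow the $\Re\psi$ estimate from Proposition~17, and prove (2) by the dyadic splitting argument.
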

    \begin{proof}
        Given $s \in I$, we define a compact complex tangential curve $M_s \subset \partial \B_2$ in the following form:
        \begin{equation*}
            M_s = \left\{ Z(0,x_2,0,s): x_2 \in [-\frac{1}{2},\frac{1}{2}] \right\}.
        \end{equation*}
        Then by Lemma~{15} in \cite{Chaumat_Chollet_Hausdorff}, there exists a $\mathcal{C}^\infty$ function $(s,z) \mapsto H(s,z)$ which is holomorphic in $z \in \B_2$ and satisfies the following:
        \begin{equation} \label{eq:H_estimate}
            |H(s,z)| \asymp d_K(z,M_s).
        \end{equation}
        For a positive integer $k$ let $N_k$ be a minimal number of arcs in $\gamma(I)$ of length $2^{-k}$ needed to cover $\tilde{E}$. Then by \eqref{eq:K_set condition} we have:
        \begin{equation} \label{eq:N_k sum}
            \sum_{k = k_0}^{\infty} 2^{-k} N_k \lesssim 2^{-k_0}.
        \end{equation}
        Let $\gamma(s_{jk})$ $j = 1,\ldots,N_k$ be the points in $\tilde{E}$ which realize this covering by the $2^{-k}$ length arcs we will define $\psi$ as follows:
        \begin{equation}
            \psi(z) = \sum_{k = 0}^\infty \sum_{j = 1}^{N_k} \frac{2^{-k}}{2^{-k}+ H(s_{jk},z)}.
        \end{equation}
        The estimates on $\Re \psi$ are the same as the ones in Proposition~{17} in \cite{Chaumat_Chollet_Hausdorff} (see also \cite{Chaumat_Chollet_Division_Prop}). In order to estimate $|D^{\alpha} \psi|$, $|\alpha| \geq 1$, first note that $H$ and all of its derivatives with respect to $z$ are bounded above. This implies
        \begin{equation*}
            |D^{\beta} \psi(z)| \lesssim  \sum_{k = 0}^\infty \sum_{j = 1}^{N_k} \frac{2^{-k}}{\left|2^{-k}+ H(s_{jk},z) \right|^{|\beta|+1}}. 
        \end{equation*}
        For a fixed $k$ let $z$ be close enough to $E$ such that there are at most two $\gamma(s_{jk})$ which are the closest to $z$. In such a case, since $\gamma(s_{jk}) \in \tilde{E}$ we have $d_K(z,E) + j 2^{-k} \lesssim d_K(z,M_{s_{jk}})$ for $j = 1,\ldots, N_k$ as the $\gamma(s_{jk})$'s are at least $2^{-k}$ distanced from each other. So by \eqref{eq:H_estimate} we have
        \begin{equation*}
            |D^{\beta} \psi(z)| \lesssim  \sum_{k = 0}^\infty \sum_{j = 1}^{N_k} \frac{2^{-k}}{\left(2^{-k} j+ d_K(z,E) \right)^{|\beta|+1}}.
        \end{equation*}
        Let $k_0$ be such that $2^{-k_0} \leq d_K(z,E) \leq 2^{-k_0+1}$. Hence, we obtain
        \begin{equation} \label{eq:sum k_geq_k_0}
            \sum_{k = k_0}^\infty \sum_{j = 1}^{N_k} \frac{2^{-k}}{\left(2^{-k} j+ d_K(z,E) \right)^{|\beta|+1}} \leq \sum_{k = k_0}^\infty \frac{2^{-k} N_k}{ d_K(z,E) ^{|\beta|+1}} \lesssim \frac{1}{ d_K(z,E) ^{|\beta|}},
        \end{equation}
        where the last inequality comes from \eqref{eq:N_k sum}. The remaining terms are estimated by
        \begin{equation} \label{eq:sum k_leq_k_0}
            \sum_{k = 0}^{k_0} \sum_{j = 1}^{N_k} \frac{2^{-k}}{\left(2^{-k} j+ d_K(z,E) \right)^{|\beta|+1}} \leq \sum_{k = 0}^{k_0} \sum_{j = 1}^{N_k} \frac{2^{k|\beta|}}{ j ^{|\beta|+1}} \lesssim \sum_{k = 0}^{k_0} 2^{k|\beta|} \asymp \frac{1}{ d_K(z,E) ^{|\beta|}}. 
        \end{equation}
        Note that \eqref{eq:sum k_geq_k_0} and \eqref{eq:sum k_leq_k_0} together finish the proof.
    \end{proof}
    \begin{proof} [Proof of Theorem~\ref{MainThm3}]
        Let $\psi$ and $E$ be as in Proposition~\ref{MainTool3}, moreover assume that
        \begin{equation*}
            |\tilde{E}_t| = \left| \{ \gamma(s): d_K(\gamma(s),\tilde{E}) \leq t \} \right| \asymp t^{1-d},
        \end{equation*}
        where $d \in [0,1]$ is the Hausdorff dimension of $\tilde{E}$. We define $f = \exp{(-\psi)}$. By raising $f$ to a power, if needed, we may assume that $f \in D_2(\B_2)$ and $R^2 f \in H^{\infty}(\B_2)$. Now take $\alpha_c = \frac{3}{2} - d$, in the same way we obtained \eqref{eq: shabih2} we can write
        \begin{equation}
            \left\| R \frac{f}{f_r} \right\|_{\alpha_c - 2}^2 \lesssim (1-r)^2 \int_{\B_2}  \frac{1}{d_K(rz,E)^4}  (1-|z|^2)^{1-\alpha_c}\, \mathrm{d}v(z),
        \end{equation}
        and, 
        \begin{equation}
            d_K(rz,E) \gtrsim  \left( (1-r) + x_1 + x_3^2 + \mathrm{dist}(Z(0,0,0,s),\tilde{E}) \right).
        \end{equation}
        Letting $t = d_K(Z(0,0,0,s),\tilde{E})$, we may write
    \begin{equation}
        \begin{aligned}
        \left\| R \frac{f}{f_r} \right\|^2_{\alpha_c - 2}
        &\lesssim (1 - r)^2 \int_{-1}^1 \int_{-1}^1 \int_{-1}^1 \int_0^1 
        \frac{x_1^{1 - \alpha_c}}{((1-r) + x_1 + x_3^2 + t)^{4}} \, dx_1\, dx_2\, dx_3\, ds.\\ 
    &\asymp (1 - r)^2 \int_{-1}^1 \int_{-1}^1 
    \frac{1}{((1 - r) + x_3^2 + t)^{\alpha_c + 2}} \, dx_3\, ds \\
    &\asymp (1 - r)^2  \int_{-1}^1
    \frac{1}{((1 - r) + t)^{\alpha_c + \frac{3}{2}}} \, ds \\
    &= (1 - r)^2 \int_{-1}^1
    \frac{1}{((1 - r) + t)^{3-d}} \, ds \\
    & \asymp (1 - r)^2 \int_0^{1} \frac{t^{1-d}}{((1 - r) + t)^{4-d}}\, \, dt\ \asymp 1 < \infty, \\
    \end{aligned}
    \end{equation}
    where, in the estimates on the last line, we applied Lemma~\ref{lem:book_exercise}.
    
    To prove the non-cyclicity part, assume that $\alpha > \frac{3}{2} - d$. Let $\mu_1$ be the normalized $d$-dimensional Hausdorff measure restricted to $\gamma^{-1}(\tilde{E})$ and $\mu_2$ be the Lebesgue measure on $[-\frac{1}{2}, \frac{1}{2}]$. We set $\mu$ to be the push-forward measure of $\mu_1 \times \mu_2$ with respect to the mapping $Z$. Therefore, $\mu$ is a probability measure on $E$, and for $\zeta \in E$ the following holds
    \begin{equation*}
        \mu (K(\zeta,r) \cap E) \asymp r^{\frac{1}{2}+d}.
    \end{equation*}
    Therefore,
    \begin{equation*}
        \dim_{K}(E) = \frac{1}{2} + d.
    \end{equation*}
    A straightforward application of the Layer Cake Representation formula(see \cite[p.~26]{LiebLoss}) and Fubini's theorem allows us to write
    \begin{equation*}
    \begin{aligned}
        I_{\alpha,2}[\mu] & = \iint_{\mathbb \partial \B_2 \times \partial \B_2} \mathcal{K}_{\alpha,2}\!\left( d_K(\zeta,\eta) \right) \, d\mu(\zeta)\, d\mu(\eta) \\
        & = \int_{1}^\infty \mu \times \mu (\{ (\zeta,\eta) \in E \times E : \frac{1}{d_K(\zeta,\eta)^{2-\alpha} } \geq r \} ) \, dr + 1 \\
        & \asymp \int_{0}^1 \mu \times \mu (\{ (\zeta,\eta) \in E \times E : d_K(\zeta,\eta)  \leq r \}) \frac{1}{r^{3-\alpha}} \, dr +1 \\
        & = \int_{0}^1 \int_{E} \mu (K(\eta,r)\cap E) \frac{1}{r^{3-\alpha}} \,d\mu(\eta) \, dr +1 \\
        & \asymp \int_{0}^1 \frac{\, dr}{r^{\frac{5}{2}-\alpha - d}} + 1 < \infty,
    \end{aligned}
    \end{equation*}
    which implies that $\operatorname{cap}_{\alpha,2}(E) > 0$ and therefore $f$ is not cyclic in $D_\alpha(\B_2)$.
    \end{proof}
\section{Examples and Discussion} \label{Section6}

In this section, we first provide examples of sets that satisfy the conditions of our main results.

\begin{defn}
    A \emph{Cantor subset} of $\mathbb{T}$ with constant dissection ratio is a closed set
    $E$ obtained by repeatedly removing open arcs as follows.  
    Set $E_0 \subseteq \mathbb{T}$ a closed arc, and for each $n \ge 1$ let $E_n$ be the union of finitely many disjoint closed arcs
    obtained from $E_{n-1}$ by removing $2^{n-1}$ many disjoint open arcs of length $l_n$.  
    Such that
    \begin{equation*}
        \sum_{n=1}^{\infty} 2^{n-1} l_n = |E_0|,
    \end{equation*}
     and,
    \begin{equation*}
        \lambda = \frac{l_{n+1}}{l_n} < \frac{1}{2}.
    \end{equation*}
\end{defn}

In such a case, it is easy to verify that $|E_t| \asymp t^{1-d}$, where $d = \frac{\mathrm{log} 2}{\mathrm{log}(1/\lambda)}$ is the Hausdorff dimension of $E$.

\begin{lem}{\cite[Lemma~4.1]{Bruna1986}} \label{lem: K-set condition}
Let $\Gamma$ be a simple transverse curve in $\partial \Bn$, parameterized by $\gamma : \mathbb{T} \to \Gamma$ and let $E \subset \Gamma$ be closed. And $\tilde{E} = \gamma^{-1}(E)$.

The following are equivalent:
\begin{enumerate}[(i)]
    \item $E$ is a $\mathcal{K}$-set.
    \item There exists $c > 0$ such that for all intervals $J \subset \mathbb{T}$ and $|J| = r$,
    \begin{equation} \label{eq:K-set Circle}    
        \int_0^r N_\varepsilon(J \cap \tilde{E})\, d\varepsilon \leq c r.
    \end{equation}
\end{enumerate}
\end{lem}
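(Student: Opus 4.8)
The plan is to transport the $\mathcal{K}$-set condition from $\partial\Bn$ down to the circle via the bi-Lipschitz dictionary between the Korányi pseudodistance restricted to the transverse curve $\Gamma$, the Euclidean distance on $\Gamma$, and the arclength distance on $\mathbb{T}$. The input is the fact recorded in item~(4) of the Remark on transversal curves (see also Section~II of \cite{Bruna1986}) that $d_K(z,w)\asymp|z-w|$ for $z,w\in\Gamma$; since $\Gamma$ is a smooth compact simple curve and $\gamma$ a regular parametrization, $\gamma$ is bi-Lipschitz onto its image, so $|\gamma(s)-\gamma(s')|\asymp|s-s'|_{\mathbb{T}}$ and hence $d_K(\gamma(s),\gamma(s'))\asymp|s-s'|_{\mathbb{T}}$. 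I shall also use that $d_K(\cdot,\cdot)^{1/2}$ obeys a quasi-triangle inequality on $\partial\Bn$, so that any two points lying in one Korányi $\varepsilon$-ball are at Korányi distance $\lesssim\varepsilon$.

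First I would prove a comparison of covering numbers: there are constants $c_1,c_2\ge1$ so that for every $S\subseteq\Gamma$,
\[
    N^{\mathrm{arc}}_{c_2\varepsilon}\big(\gamma^{-1}(S)\big)\ \le\ N_\varepsilon(S)\ \le\ N^{\mathrm{arc}}_{\varepsilon/c_1}\big(\gamma^{-1}(S)\big),
\]
where $N_\varepsilon(S)$ is the least number of Korányi $\varepsilon$-balls covering $S$ and $N^{\mathrm{arc}}_\delta$ the least number of arcs of length $\delta$ covering a subset of $\mathbb{T}$ (equivalently, up to constants, the least number of Korányi $\delta$-balls of $\mathbb{T}$). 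The left inequality holds because the trace on $\Gamma$ of a Korányi $\varepsilon$-ball has Korányi, hence Euclidean, diameter $\lesssim\varepsilon$, so its $\gamma$-preimage lies in an arc of length $\le c_2\varepsilon$; the right one because $\gamma$ of an arc of length $\varepsilon/c_1$ has Euclidean, hence Korányi, diameter $\lesssim\varepsilon/c_1$, which fits inside a single Korányi $\varepsilon$-ball once $c_1$ is large enough. Then I would match the \emph{windows}: if $K_r\cap\Gamma\neq\varnothing$, the quasi-triangle inequality together with $d_K\asymp|\cdot-\cdot|$ forces $\gamma^{-1}(K_r\cap\Gamma)$ into an interval $J$ of length $\le Cr$; conversely, for an interval $J$ with $|J|=r$, the arc $\gamma(J)$ sits inside a Korányi ball of radius $\le C'r$.

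Granting these comparisons, the equivalence is a change of variables in the defining integrals. For (i)$\Rightarrow$(ii): given $J$ with $|J|=r$, pick a Korányi ball $K_{Cr}$ of radius $Cr$ containing $\gamma(J)$; then $\gamma(J\cap\tilde E)\subseteq K_{Cr}\cap E$, and the covering comparison (with $\varepsilon$ replaced by $\varepsilon/c_2$), together with injectivity of $\gamma$ and monotonicity of $N_\varepsilon$ in the covered set, gives $N^{\mathrm{arc}}_{\varepsilon}(J\cap\tilde E)\le N_{\varepsilon/c_2}(K_{Cr}\cap E)$, whence after the substitution $u=\varepsilon/c_2$
\[
    \int_0^{r} N^{\mathrm{arc}}_{\varepsilon}(J\cap\tilde E)\,d\varepsilon\ \le\ c_2\int_0^{Cr} N_{u}(K_{Cr}\cap E)\,du\ \le\ c_2 C c\, r,
\]
the final step being the $\mathcal{K}$-set bound \eqref{eq:K_set condition} applied to the ball $K_{Cr}$. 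The reverse implication (ii)$\Rightarrow$(i) is symmetric, reading the two comparisons in the opposite direction. The main obstacle I anticipate is purely one of bookkeeping: because $d_K$ is only a quasi-metric, Korányi balls must be handled through the quasi-triangle inequality rather than a genuine one, and one has to verify that the three independent families of multiplicative constants — from $d_K\asymp|\cdot-\cdot|$ on $\Gamma$, from $\gamma$ being bi-Lipschitz, and from the quasi-triangle inequality — are all absorbed into the single constant $c$ of \eqref{eq:K_set condition} and \eqref{eq:K-set Circle}; monotonicity of the covering numbers in both the radius and the covered set, together with nonnegativity of the integrands, makes the mismatches between integration limits ($r$ versus $Cr$) harmless.
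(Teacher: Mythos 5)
The paper does not actually give a proof of this lemma: it is stated with an attribution to~\cite[Lemma~4.1]{Bruna1986} and used as a black box, so there is no internal proof to compare against. Your argument is a correct self-contained reconstruction and is the natural one. The two inputs you isolate are exactly what is needed: the comparability $d_K(\gamma(s),\gamma(s'))\asymp|s-s'|_{\mathbb T}$ on a transversal curve (item~(4) of the Remark in Section~\ref{Subsec2.1}, combined with $\gamma$ being a regular, hence bi-Lipschitz, parametrization of the compact curve $\Gamma$), and the quasi-triangle behaviour of $d_K$ (equivalently, that $d_K^{1/2}$ is a genuine metric on $\partial\Bn$). From these the covering-number comparison $N^{\mathrm{arc}}_{c_2\varepsilon}(\gamma^{-1}(S))\le N_\varepsilon(S)\le N^{\mathrm{arc}}_{\varepsilon/c_1}(\gamma^{-1}(S))$ and the window comparison (Korányi ball $\leftrightarrow$ arc of comparable length) follow as you describe, and both implications reduce to a change of variables in the integral.

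One small point worth spelling out in the direction (ii)~$\Rightarrow$~(i): after confining $\gamma^{-1}(K_r\cap\Gamma)$ to an interval $J_0$ of length $\lesssim r$, you should enlarge $J_0$ to a fixed interval $J\supseteq J_0$ of length exactly $C'r$ and apply hypothesis~(ii) to $J$, rather than to $J_0$ directly; monotonicity of $N^{\mathrm{arc}}_u$ in the covered set and in $u$, plus nonnegativity of the integrand, then give
\[
\int_0^r N_\varepsilon(K_r\cap E)\,d\varepsilon \;\le\; c_1\int_0^{r/c_1} N^{\mathrm{arc}}_u(J_0\cap\tilde E)\,du \;\le\; c_1\int_0^{C'r} N^{\mathrm{arc}}_u(J\cap\tilde E)\,du \;\le\; c_1\,c\,C'\,r .
\]
This is precisely the kind of harmless limit mismatch your closing paragraph flags, and it is handled correctly. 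No gap.
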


\begin{exam}
Let $I \subsetneq \mathbb T$ be a closed interval, and let $\tilde E \subset I$
be a Cantor set with Hausdorff dimension $d\in(0,1)$.
Let $\Gamma \subset \partial\mathbb B_2$ be a simple smooth closed transversal curve,
and let $\gamma:\mathbb T\to\Gamma$ be a parametrization.
Set
\begin{equation*}
    E := \gamma(\tilde E)\subset\partial\mathbb B_2 .
\end{equation*}
\end{exam}

Then $E$ is contained in a transversal curve and has Hausdorff dimension $d$. Moreover, by Lemma~{\ref{lem: K-set condition}}, to verify that $E$ is a $\mathcal{K}$-set it suffices to see that $\tilde{E}$ is a porous set (see section~{7} in \cite{Noell_peakset_survey}).
Therefore $E$ satisfies the assumptions of Theorem~\ref{MainThm1} and can be used to create sets satisfying conditions of Theorem~{\ref{MainThm3}} using \eqref{eq: mainthm3 set creation}. 

\begin{exam}
Let $I \subseteq \mathbb [0,1]$ be a closed interval, and let $\tilde E \subset I$
be a Cantor set with Hausdorff dimension $d\in(0,1)$.
Let $\Gamma \subset \partial\mathbb B_2$ be a smooth complex tangential curve,
and let $\gamma:\mathbb [0,1]\to\Gamma$ be a parametrization.
Define
\begin{equation*}
    E := \gamma(\tilde E)\subset\partial\mathbb B_2 .
\end{equation*}
\end{exam}
Then $E$ trivially satisfies conditions of Theorem~{\ref{MainThm2}}.

We conclude the paper, by mentioning that, the proofs for Theorem~{\ref{MainThm1}} and Theorem~{\ref{MainThm3}} can be easily extended for any dimension $n \geq 2$. This allows us to create examples of critically cyclic functions for any $\alpha_c \in [\frac{n-1}{2}, n]$. However, an analog of Theorem~{\ref{MainThm2}} for $n \geq 2$ would be more technically demanding. It is also useful to note that existence of peak sets for $A(\Bn)$ having Hausdorff dimension $2n-1$, might be promising to search for critically cyclic functions whenever $\alpha_c < \frac{n-1}{2}$  \cite{Henriksen1982}.

\section*{Acknowledgements}
The author is deeply grateful to Łukasz Kosiński and Pascal Thomas, his PhD advisors, for their guidance and constant support during the preparation of this work. He especially thanks Pascal Thomas for his major contribution to the development of the main ideas, and Łukasz Kosiński for his numerous insightful comments and continued encouragement. He also thanks Vincent Feuvrier for useful mathematical discussions. The author also thanks the Institut de Mathématiques de Toulouse and the Department of Mathematics at the Jagiellonian University for providing excellent working conditions.

\bibliographystyle{plain} 

\end{document}